\DeclareMathOperator{\diag}{diag}
\newtheorem{lemma}{Lemma}[section]
\newtheorem{theorem}[lemma]{Theorem}
\newtheorem{proposition}[lemma]{Proposition}
\newtheorem{corollary}[lemma]{Corollary}
\newtheorem{definition}[lemma]{Definition}
\newtheorem{example}[lemma]{Example}
\newtheorem{remark}[lemma]{Remark}
\newcommand{\R}{\ensuremath{\mathbb{R}}}
\newcommand{\id}{\ensuremath{\mbox{id}}}
\newcommand{\K}{\ensuremath{\mathcal{K}}}
\newcommand{\Kinf}{\ensuremath{\mathcal{K}_\infty}}
\newcounter{enumctr}
\newenvironment{enum}{\begin{list}{(\roman{enumctr})}
{\usecounter{enumctr}}}{\end{list}}
\begin{document}
\title{A small gain condition for interconnections of ISS systems with mixed ISS characterizations}

\author{Sergey~Dashkovskiy, Michael~Kosmykov, Fabian~Wirth
\thanks{S. Dashkovskiy is with Faculty of Mathematics and Computer Science, University of Bremen, 28334 Bremen, Germany
        {\tt\small dsn@math.uni-bremen.de}}%
\thanks{M. Kosmykov is with Faculty of Mathematics and Computer Science, University of Bremen, 28334 Bremen, Germany
        {\tt\small kosmykov@math.uni-bremen.de}}%
\thanks{F. Wirth is with Institute for Mathematics, University of W\"urzburg, 97074 W\"urzburg, Germany
        {\tt\small wirth@mathematik.uni-wuerzburg.de}}%
}



\maketitle

\begin{abstract}
  We consider interconnected nonlinear systems with external inputs, where
  each of the subsystems is assumed to be input-to-state stable (ISS).
  Sufficient conditions of small gain type are provided guaranteeing that the
  interconnection is ISS with respect to the external input. To this end we
  extend recently obtained small gain theorems to a more general type of
  interconnections. The small gain theorem provided here is applicable to
  situations where the ISS conditions are formulated differently for each
  subsystem and are either given in the maximization or the summation
  sense. Furthermore it is shown that the conditions are compatible in the
  sense that it is always possible to transform sum formulations to maximum
  formulations without destroying a given small gain condition. An example
  shows the advantages of our results in comparison with the known ones.
\end{abstract}

\begin{IEEEkeywords}
Control systems, nonlinear systems, large-scale systems, stability
criteria, Lyapunov methods.
\end{IEEEkeywords}

%
\IEEEpeerreviewmaketitle

\section{Introduction}
Stability of nonlinear systems with inputs can be described in different ways
as for example in sense of dissipativity \cite{WiJ72}, passivity \cite{Vid81},
\cite{WeA04}, input-to-state stability (ISS) \cite{Son89} and others. In this
paper we consider general interconnections of nonlinear systems and assume
that each subsystem satisfies an ISS property.  The main question of the paper
is whether an interconnection of several ISS systems is again ISS. As the ISS
property can be defined in several equivalent ways we are interested in
finding optimal formulations of the small gain condition that are adapted to a
particular formulation. In particular we are interested in a possibly sharp
stability condition for the case when the ISS characterization of single
systems are different. Moreover we will provide a construction of an ISS
Lyapunov function for interconnections of such systems.

Starting with the pioneering works \cite{JTP94}, \cite{JMW96}
stability of interconnections of ISS systems has been studied by
many authors, see for example \cite{LaN03}, \cite{AnA07},
\cite{Cha05}, \cite{Ito08}. In particular it is known that
cascades of ISS systems are ISS, while a feedback interconnection
of two ISS systems is in general unstable.  The first result of
the small gain type was proved in \cite{JTP94} for a feedback
interconnection of two ISS systems.  The Lyapunov version of this
result is given in \cite{JMW96}. Here we would like to note the
difference between the small gain conditions in these papers. One
of them states in \cite{JMW96} that the composition of both gains
should be less then identity. The second condition in \cite{JTP94}
is similar but it involves the composition of both gains and
further functions of the form $(\id+\alpha_i)$. This difference is
due to the use of different definitions of ISS in both papers.
Both definitions are equivalent but the gains enter as a maximum
in the first definition, and a sum of the gains is taken in the
second one. The results of \cite{JTP94} and \cite{JMW96} were
generalized for an interconnection of $n\ge2$ systems in
\cite{DRW07-mcss,DRW07-NOLCOS}, \cite{JiW08}, \cite{KaJ09}. In
\cite{DRW07-mcss,DRW07-NOLCOS} it was pointed out that a
difference in the small gain conditions remains, i.e., if the
gains of different inputs enter as a maximum of gains in the ISS
definition or a sum of them is taken in the definition.  Moreover,
it was shown that the auxiliary functions $(\id+\alpha_i)$ are
essential in the summation case and cannot be omitted,
\cite{DRW07-mcss}. In the pure maximization case the small gain
condition may also be expressed as a condition on the cycles in
the gain matrix, see e.g.
\cite{Tee05,DRW07-mcss,Rueff07,JiW08,KaJ09}. A formulation of ISS
in terms of monotone aggregation functions for the case of many
inputs was introduced in \cite{Rueff07,DRW-CDC08,DRW09}. For
recent results on the small gain conditions for a wider class of
interconnections we refer to \cite{JiW08}, \cite{ItJ07},
 \cite{KaJ09}.
 In \cite{ItJ09} the authors consider necessary and sufficient small gain
 conditions for interconnections of two ISS systems in dissipative form.

 In some applications it may happen that the gains of a part of systems of an
 interconnection are given in maximization terms while the gains of another
 part are given in a summation formulation. In this case we speak of mixed ISS
 formulations. We pose the question whether and where we need the functions
 $(\id+\alpha_i)$ in the small gain condition to assure stability in this
 case. In this paper we consider this case and answer this question. Namely we
 consider $n$ interconnected ISS systems, such that in the ISS definition of
 some $k\le n$ systems the gains enter additively.  For the remaining systems
 the definition with maximum is used.  Our result contains the known small
 gain conditions from \cite{DRW07-mcss} as a special case $k=0$ or $k=n$,
 i.e., if only one type of ISS definition is assumed. An example given in this
 paper shows the advantages of our results in comparison with the known ones.

 This paper is organized as follows. In Section~\ref{sec:Preliminaries} we
 present the necessary notation and definitions. Section~\ref{sec:Auxilary}
 discusses properties of gain operators in the case of mixed ISS
 formulations. In particular we show that the mixed formulation can in
 principle always be reduced to the maximum formulation. A new small gain
 condition adapted to the mixed ISS formulation ensuring stability of the
 considered interconnection is proved in Section~\ref{sec:SGT}.
 Section~\ref{sec:Lyapunov} provides a construction of ISS Lyapunov functions
 under mixed small gain conditions. We note some concluding remarks in
 Section~\ref{sec:Conclusion}.

\section{Preliminaries and problem statement}
\label{sec:Preliminaries}

\subsection{Notation}
In the following we set ${\mathbb{R}}_{+}:=[0,\infty)$ and denote the
positive orthant ${\mathbb{R}}_{+}^n:= [0,\infty)^n$. The transpose of a
vector $x \in \mathbb{R}^n$ is denoted by $x^T$.  On ${\mathbb{R}}^n$ we
use the standard partial order induced by the positive orthant given by
\[
\begin{array}{l}
x\geq y\, \Longleftrightarrow\, x_i \geq y_i,\quad i=1,\ldots,n,\\
x>y\,\Longleftrightarrow\, x_i>y_i,\quad i=1,\ldots,n.
\end{array}\]
With this notation ${\mathbb{R}}_{+}^n:= \{ x \in \R^n \;:\; x
\geq 0 \}$. We write $x\not\geq y\,\Longleftrightarrow\, \exists
\,i\in\{1,\ldots,n\}: \, x_i<y_i.$ For a nonempty index set
$I\subset\{1,\ldots,n\}$ we denote by $|I|$ the number of elements
of $I$. We write $y_I$ for the restriction $y_I:=(y_i)_{i\in I}$
of vectors $y\in \R_{+}^n$. Let $R_I$ be the anti-projection
${\mathbb{R}}_{+}^{|I|}\rightarrow{\mathbb{R}}_{+}^{n}$, defined
by
$$x\mapsto\sum\limits_{k=1}^{|I|}x_ke_{i_k},$$
where $\{e_k\}_{k=1,\dots,n}$ denotes the standard basis in
${\mathbb{R}}^{n}$ and $I=\{i_1,\ldots,i_{|I|}\}$.

For a function $v:{\mathbb{R}}_{+}\mapsto{\mathbb{R}}^{m}$ we
define its restriction to the interval $[s_1,s_2]$ by
\[v_{[s_1,\, s_2]}(t)=\left\{\begin{array}{ll}
v(t),& \mbox{if }t \in [s_1,s_2],\\
0,& \mbox{otherwise.}
\end{array}\right.\]

A function $\gamma:{\mathbb{R}}_{+}\mapsto{\mathbb{R}}_{+}$ is said to be of
class $\cal{K}$ if it is continuous, strictly increasing and $\gamma(0)=0$. It
is of class $\cal{K}_{\infty}$ if, in addition, it is unbounded. Note that for
any $\alpha \in \cal{K}_{\infty}$ its inverse function $\alpha^{-1}$ always
exists and $\alpha^{-1}\in \cal{K}_{\infty}$.  A function $\beta:
{\mathbb{R}}_{+}\times{\mathbb{R}}_{+}\mapsto{\mathbb{R}}_{+}$ is said to be
of class $\cal{KL}$ if, for each fixed $t$, the function $\beta(\cdot,t)$ is
of class $\cal{K}$ and, for each fixed $s$, the function $t\mapsto
\beta(s,t)$ is non-increasing and tends to zero for $t \to \infty$. By
$\id$ we denote the identity map.

Let $|\cdot|$ denote some norm in ${\mathbb{R}}^{n}$, and let in particular
${|x|}_{\max}=\max\limits_i|x_i|$ be the maximum norm. The essential supremum
norm of a measurable function $\phi:\R_+\to \R^m$ is denoted by
${\|\phi\|}_{\infty}$. $L_{\infty}$ is the set of measurable functions for
which this norm is finite.

\subsection{Problem statement}
Consider the system
\begin{equation}\label{whole system}
\dot{x}=f(x,u),\quad x \in {\mathbb{R}}^{n}, \quad
u\in{\mathbb{R}}^{m},
\end{equation}
and assume it  is forward complete, i.e., for all
initial values $x(0)\in {\mathbb{R}}^{n}$ and all essentially bounded
measurable inputs $u$ solutions $x(t) = x(t;x(0),u)$
exist for all positive times. Assume also that
for any initial value $x(0)$ and input $u$ the solution is unique.

The following notions of stability are used in the remainder of the paper.

\begin{definition}
System (\ref{whole system}) is called
\begin{enum}
\item  {\it input-to-state stable}
(ISS), if there exist functions $\beta\in\cal{KL}$ and
$\gamma\in\cal{K}$, such that
\begin{equation}\label{iss_sum}
|x(t)|\leq\beta(|x(0)|,t)+\gamma({\|u\|}_{\infty})\,,\quad \forall
x(0)\in {\mathbb{R}}^{n}\,, u \in
L_{\infty}(\R_{+},\R^{m})\,,t\ge0.
\end{equation}
\item   {\it globally stable} (GS), if there
  exist functions $\sigma$, $\hat{\gamma}$ of class $\cal{K}$, such that
\begin{equation}\label{gs_sum}
|x(t)|\leq\sigma(|x(0)|)+\hat{\gamma}({\|u\|}_{\infty}) \,,\quad \forall
x(0)\in {\mathbb{R}}^{n}\,, u \in L_{\infty}(\R_{+},\R^{m})\,,
t\geq 0.
\end{equation}

\item System (\ref{whole system}) has the {\it asymptotic gain} (AG)
  property, if there exists a function $\overline{\gamma}\in \cal{K}$, such that
\begin{equation}\label{ag_sum}\limsup\limits_{t \to
    \infty}|x(t)|\leq\overline{\gamma}({\|u\|}_{\infty})\,,\quad \forall
x(0)\in {\mathbb{R}}^{n}\,, u \in L_{\infty}(\R_{+},\R^{m})\,.
\end{equation}
\end{enum}
\end{definition}

\begin{remark}
An equivalent definition of ISS is obtained if
instead of using summation of terms in
(\ref{iss_sum}) the maximum is used as follows:
\begin{equation}\label{iss_max}
|x(t)|\leq\max\{\tilde\beta(|x(0)|,t),\tilde\gamma({\|u\|}_{\infty})\}.
\end{equation}
Note that for a given system sum and maximum formulations may lead to
different comparison functions $\tilde\beta,\,\tilde\gamma$ in
(\ref{iss_max}) than those in (\ref{iss_sum}). In a similar manner an
equivalent definition can be formulated for GS in maximization terms.
\end{remark}
\begin{remark}
  In \cite{SoW96} it was shown that a system \eqref{whole system} is ISS
  if and only if it is GS and has the AG property.
\end{remark}

We wish to consider criteria for ISS of interconnected systems. Thus
consider $n$ interconnected control systems given by
\begin{equation}\label{subsystems}
\begin{array}{ccc}
{\dot{x}}_1&=&f_1(x_1,\ldots,x_n,u_1)\\
&\vdots&\\
{\dot{x}}_n&=&f_n(x_1,\ldots,x_n,u_n)
\end{array}
\end{equation} where $x_i \in {\mathbb{R}}^{N_i}$, $u_i \in
{\mathbb{R}}^{m_i}$ and the functions $f_i:
{\mathbb{R}}^{\sum_{j=1}^{n}N_j +
m_i}\rightarrow{\mathbb{R}}^{N_i}$ are continuous and for all $r
\in {\mathbb{R}}$ are locally Lipschitz continuous in
$x={({x_1}^T,\ldots,{x_n}^T)}^T$ uniformly in $u_i$ for $|u_i|\leq
r$. This regularity condition for $f_i$ guarantees the existence
and uniqueness of solution for the $i$th subsystem for a given
initial condition and input $u_i$.

The interconnection (\ref{subsystems}) can be written as
(\ref{whole system}) with $x:=(x_1^T,\dots,x_n^T)^T$,
$u:=(u_1^T,\dots,u_n^T)^T$  and
\[f(x,u)=\left(f_1(x_1,\dots,x_n,u_1)^T,\ldots,
f_n(x_1,\dots,x_n,u_n)^T\right)^T.\]
If we consider the individual subsystems, we treat the state $x_j,  j
\neq i$ as an independent input for the $i$th subsystem.

We now intend to formulate ISS conditions for the subsystems of
\eqref{subsystems}, where some conditions are in the sum formulation as in
\eqref{iss_sum} while other are given in the maximum form as in
\eqref{iss_max}.  Consider the index set $I:=\{1,\ldots,n\}$ partitioned into
two subsets $I_{\Sigma}$, $I_{\max}$ such that $I_{\max}=I \setminus
I_{\Sigma}$.

The $i$th subsystem of (\ref{subsystems}) is ISS, if
there exist functions
$\beta_i$ of class $\cal{KL}$, $\gamma_{ij}$, $\gamma_i \in
\cal{K}_\infty\cup\{$0$\}$ such that for all initial values
$x_i(0)$ and inputs $u \in L_{\infty}(\R_{+},\R^{m})$ there exists a unique
solution $x_i(\cdot)$ satisfying for all $t\ge0$
\begin{equation}\label{subssystems ISS_sum}
|x_i(t)|\leq
\beta_i(|x_i(0)|,t)+\sum\limits_{j=1}^{n}\gamma_{ij}({\|x_{j[0,t]}\|}_{\infty})+\gamma_i({\|u\|}_{\infty})
\,, \quad \text{if} \quad i\in I_{\Sigma} \,,
\end{equation}
 and
\begin{equation}\label{subssystems ISS_max}
\begin{array}{l}
|x_i(t)|
\leq\max\{\beta_i(|x_i(0)|,t),\max\limits_j\{\gamma_{ij}({\|x_{j[0,t]}\|}_{\infty})\},\gamma_i({\|u\|}_{\infty})\}
\,, \quad \text{if} \quad i\in I_{\max}\,.
\end{array}
\end{equation}
\begin{remark}\label{rem:index_subsets}
  Note that without loss of generality we can assume that
  $I_{\Sigma}=\{1,\ldots,k\}$ and $I_{\max}=\{k+1,\ldots,n\}$ where
  $k:=|I_{\Sigma}|$. This can be always achieved by a permutation of the
  subsystems in (\ref{subsystems}).
\end{remark}

Since ISS implies GS and the AG property, there exist functions
$\sigma_{i},\hat{\gamma}_{ij}$, $\hat{\gamma}_{i} \in
\cal{K}\cup\{$0$\}$, such that for any
 initial value $x_i(0)$ and
input $u \in L_{\infty}(\R_{+},\R^{m})$ there exists a unique
solution $x_i(t)$ and for all $t\geq 0$
\begin{eqnarray}
\label{subssystems GS_sum}
|x_i(t)| \leq&
\sigma_{i}(|x_i(0)|)+\sum\limits_{j=1}^{n}\hat{\gamma}_{ij}({\|x_{j[0,t]}\|}_{\infty})+\hat{\gamma}_{i}({\|u\|}_{\infty})\,,
& \quad \text{if} \quad i\in I_{\Sigma}\,,\\
\label{subssystems GS_max}
|x_i(t)|\leq &\max\{\sigma_{i}(|x_i(0)|),\max\limits_j\{\hat{\gamma}_{ij}({\|x_{j[0,t]}\|}_{\infty})\},\hat{\gamma}_{i}({\|u\|}_{\infty})\}\,,
& \quad \text{if} \quad i\in I_{\max}\,,
\end{eqnarray}
which are the defining inequalities for the GS property of the $i$-th
subsystem.

The AG property is defined in the same spirit by assuming that
there exist functions $\overline{\gamma}_{ij}$,
$\widetilde{\gamma}_i \in \cal{K}\cup\{$0$\}$, such that for any
initial value $x_i(0)$ and inputs $x_j \in
L_{\infty}(\R_{+},\R^{N_j})$, $i\neq j$, $u \in
L_{\infty}(\R_{+},\R^{m})$ there exists a unique solution $x_i(t)$
and
\begin{eqnarray}
\label{subssystems AG_sum} \limsup\limits_{t \to
\infty}|x_i(t)|\leq &
\sum\limits_{j=1}^{n}\overline{\gamma}_{ij}({\|x_{j}\|}_{\infty})+\overline{\gamma}_i({\|u\|}_{\infty})\,,
& \quad \text{if} \quad i\in I_{\Sigma}\,,\\
\label{subssystems AG_max}
\limsup\limits_{t \to \infty}|x_i(t)| \leq &
\max\{\max\limits_j\{\overline{\gamma}_{ij}({\|x_{j}\|}_{\infty})\},\overline{\gamma}_i({\|u\|}_{\infty})\}\,,
& \quad \text{if} \quad i\in I_{\max}\,.
\end{eqnarray}

We collect the gains $\gamma_{ij}\in \Kinf\cup \{0\}$ of the ISS conditions
\eqref{subssystems ISS_sum}, \eqref{subssystems ISS_max} in a matrix
$\Gamma=(\gamma_{ij})_{n\times n}$, with the convention $\gamma_{ii}\equiv
0$,\, $i=1,\dots,n$. The operator $\Gamma:\R_{+}^n\rightarrow\R_{+}^n$ is then
defined by
\begin{equation}\label{operator_gamma}
\Gamma(s):=\left(\Gamma_1(s),\ldots,\Gamma_n(s)\right)^T\,,
\end{equation}
where the functions $\Gamma_i:\R_{+}^n\rightarrow\R_{+}$ are given by
$\Gamma_i(s):=\gamma_{i1}(s_1)+\dots+\gamma_{in}(s_n)$ for $i\in
I_{\Sigma}$ and
$\Gamma_i(s):=\max\{\gamma_{i1}(s_1),\dots,\gamma_{in}(s_{n})\}$
for $i\in I_{\max}$. In particular, if
$I_{\Sigma}=\{1,\ldots,k\}$ and $I_{\max}=\{k+1,\ldots,n\}$ we
have
 \begin{equation}
\label{operator_gamma_reordered}
\Gamma(s)=\left(\begin{array}{c} \gamma_{12}(s_2)+\dots+\gamma_{1n}(s_n)\\
\vdots\\
\gamma_{k1}(s_1)+\dots+\gamma_{kn}(s_n)\\
\max\{\gamma_{k+1,1}(s_1),\dots,\gamma_{k+1,n}(s_n)\}\\
\vdots\\
\max\{\gamma_{n1}(s_1),\dots,\gamma_{n,n-1}(s_{n-1})\}
\end{array}\right)\,.
\end{equation}

In \cite{DRW07-mcss} small gain conditions were considered for the case
$I_{\Sigma}=I=\{1,\ldots,n\}$, respectively $I_{\max}=I$.
In \cite{Rueff07,DRW09} more general formulations of ISS are considered,
which encompass the case studied in this paper.  In this paper we exploit
the special structure to obtain more specific results than available before.

Our main question is whether the interconnection (\ref{subsystems}) is ISS
from $u$ to $x$. To motivate the approach we briefly recall the small gain
conditions for the cases $I_{\Sigma}=I$, resp. $I_{\max}=I$, which imply
ISS of the interconnection, \cite{DRW07-mcss}. If $I_{\Sigma}=I$, we need to
assume that there exists a $D:={\diag}_n(\id+\alpha)$, $\alpha \in
\K_{\infty}$ such that
\begin{equation}\label{small gain condition_sum}
\Gamma\circ D(s)\not\geq s,\, \forall s \in
{\R}_{+}^n\backslash\{0\}\,,
\end{equation}
 and if $I_{\max}=I$, then the small gain condition
\begin{equation}\label{small gain condition_max}
\Gamma(s)\not\geq s,\, \forall s \in {\R}_{+}^n\backslash\{0\}
\end{equation}
is sufficient. In case that both $I_{\Sigma}$ and $I_{\max}$ are not empty we
can use
\begin{equation}\label{max<sum}
\max_{i=1,\ldots,n}\{x_i\}\leq \sum_{i=1}^n x_i \leq n \max_{i=1,\ldots,n}
\{x_i\}
\end{equation}
to pass to the situation with $I_{\Sigma}=\emptyset$ or
$I_{\max}=\emptyset$. But this leads to more conservative gains. To avoid this
conservativeness we are going to obtain a new small gain condition for the case
$I_{\Sigma}\neq I \neq I_{\max}$. As we will see there are two essentially
equivalent approaches to do this. We may use the weak triangle inequality
\begin{equation}
  \label{eq:weaktri}
  a + b \leq \max \{ (\id + \eta) \circ a, (\id +\eta^{-1}) \circ b \}\,,
\end{equation}
which is valid for all functions $a,b, \eta \in \Kinf$ as discussed in
Section~\ref{sumtomax} to pass to a pure maximum formulation of ISS. However,
this method involves the right choice of a large number of weights in the weak
triangular inequality which can be a nontrivial problem.  Alternatively
tailor-made small gain conditions can be derived. The expressions in
(\ref{small gain condition_sum}), (\ref{small gain condition_max}) prompt us
to consider the following small gain condition. For a given $\alpha\in\Kinf$
let the diagonal operator $D_\alpha:\R_{+}^n\rightarrow\R_{+}^n$ be defined by
\begin{equation}
\label{D}
  D_\alpha(s):=(D_1(s_1),\ldots,D_n(s_n))^T\,,\quad s\in\R_{+}^n\,,
\end{equation}
where $D_i(s_i):=(\id+\alpha)(s_i)$ for $i\in I_{\Sigma}$
 and $D_i(s_i):=s_i$ for $i\in I_{\max}$.  The small gain
condition on the operator $\Gamma$ corresponding to a partition $I = I_\Sigma
\cup I_{\max}$ is then
\begin{equation}
\label{small gain condition_mix}
\exists\ \alpha\in \Kinf \quad : \quad
\Gamma\circ
D_\alpha(s)\not\geq s, \, \forall s \in {\R}_{+}^n\backslash\{0\}.
\end{equation}
We will abbreviate this condition as $ \Gamma\circ D_\alpha \not \geq
\id$.  In this paper we will prove that this small gain condition
guarantees the ISS property of the interconnection (\ref{subsystems}) and
show how an ISS-Lyapunov function can be constructed if this condition is
satisfied in the case of a Lyapunov formulation of ISS.

Before developing the theory we discuss an example to highlight the advantage of
the new small gain condition \eqref{small gain condition_mix},
cf. Theorem~\ref{theorem_iss}. In order not to cloud the issue we keep the
example as simple as possible.

\begin{example}
  We consider an interconnection of $n=3$ systems given by
\begin{equation}\label{eq:ex}
\begin{split}
\dot x_1=&-x_1+\gamma_{13}(|x_3|)+\gamma_1(u)\\
\dot x_2=&-x_2+\max\{\gamma_{21}(|x_1|),\gamma_{23}(|x_3|)\}\\
\dot x_3=&-x_3+\max\{\gamma_{32}(|x_2|),\gamma_3(u)\}
\end{split}
\end{equation}
where the $\gamma_{ij}$ are given $\Kinf$ functions. Using the variation of
constants method and the weak triangle inequality \eqref{eq:weaktri} we see
that the trajectories can be estimated by:
\begin{equation}\label{ex}
\begin{split}
|x_1(t)|&\leq\beta_1(|x(0)|,t)+\gamma_{13}(||x_{3[0,t]}||_{\infty})+\gamma_1({\|u\|}_{\infty})\\
|x_2(t)|&\leq \max\{\beta_2(|x(0)|,t),(\id +\eta)\circ\gamma_{21}(||x_{1[0,t]}||_{\infty}),
(\id +\eta)\circ\gamma_{23}(||x_{3[0,t]}||_{\infty})\}\\
|x_3(t)|&\leq \max\{ \beta_3(|x(0)|,t),(\id +\eta)\circ\gamma_{32}(||x_{2[0,t]}||_{\infty}),(\id +\eta)\circ\gamma_3({\|u\|}_{\infty})\}\,,
\end{split}
\end{equation}
where the $\beta_i$ are appropriate ${\cal KL}$ functions and $\eta \in \Kinf$
is arbitrary.

This shows that each subsystem is ISS.
In this case we have
\[\Gamma=\left(\begin{array}{ccc}
0&0&\gamma_{13}\\
(\id +\eta)\circ\gamma_{21}&0&(\id +\eta)\circ\gamma_{23}\\
0&(\id +\eta)\circ\gamma_{32}&0
\end{array}\right).
\]
Then the small gain condition (\ref{small gain condition_mix})
requires that there exists an $\alpha \in  \Kinf$ such that
\begin{equation}\label{ex:sgc}
\left(\begin{array}{c}
\gamma_{13}(s_3)\\
\max\{(\id +\eta)\circ\gamma_{21}\circ(\id+\alpha)(s_1),(\id +\eta)\circ\gamma_{23}(s_3)\}\\
(\id +\eta)\circ\gamma_{32}(s_2)
\end{array}\right)
\not\geq \left(\begin{array}{c}
s_1\\
s_2\\
s_3
\end{array}\right)\end{equation}
for all $s \in {\R}_{+}^3\backslash\{0\}$.  If (\ref{ex:sgc})
holds then considering $s^T(r):=(\gamma_{13}\circ(\id
+\eta)\circ\gamma_{32}(r),r,(\id +\eta)\circ\gamma_{32}(r))^T$,
$r>0$ we obtain that the following two inequalities are satisfied
\begin{eqnarray}
\label{ex_en1}
(\id+\alpha)\circ\gamma_{13}\circ(\id +\eta)\circ\gamma_{32}\circ(\id +\eta)\circ\gamma_{21}(r)<r,\\
\label{ex_en2} (\id +\eta)\circ\gamma_{23}\circ(\id
+\eta)\circ\gamma_{32}(r)<r.
\end{eqnarray}
It can be shown by contradiction that \eqref{ex_en1}
and \eqref{ex_en2} imply \eqref{ex:sgc}.

To give a simple example assume the that the gains are linear and given by
$\gamma_{13}:=\gamma_{21}:=\gamma_{23}:=\gamma_{32}(r)=0.9\, r$, $r\geq
0$. Choosing $\alpha=\eta = 1/10$ we see that the inequalities (\ref{ex_en1})
and \eqref{ex_en2}) are satisfied. So by Theorem \ref{theorem_iss} we conclude
that system (\ref{whole system}) is ISS. In this simple example we also see
that a transformation to the pure maximum case would have been equally
simple. An application of the weak triangle inequality for the first row with
$\eta = \alpha$ would have led to the pure maximization case. In this case the
small gain condition may be expressed as a cycle condition
\cite{Tee05,DRW07-mcss,Rueff07,JiW08,KaJ09}, which just yields the conditions
(\ref{ex_en1}) and \eqref{ex_en2}.

\end{example}
We would like to note that application of the small gain condition from
\cite{DRW07-mcss} will not help us to prove stability for this example, as can
be seen from the following example.
\begin{example}
  In order to apply results from \cite{DRW07-mcss} we
  could (e.g. by using \eqref{max<sum}) obtain estimates
  of the form
\begin{eqnarray}\label{ex_sum}
\nonumber
|x_1(t)|&\leq&\beta_1(|x(0)|,t)+\gamma_{13}(||x_{3[0,t]}||_{\infty})+\gamma_1({\|u\|}_{\infty})\\
|x_2(t)|&\leq&\beta_2(|x(0)|,t)+\gamma_{21}(||x_{1[0,t]}||_{\infty})+\gamma_{23}(||x_{3[0,t]}||_{\infty})\\
|x_3(t)|&\leq&\beta_3(|x(0)|,t)+\gamma_{32}(||x_{2[0,t]}||_{\infty})+\gamma_3({\|u\|}_{\infty}) \,.
\nonumber
\end{eqnarray}
With the gains from the previous example the corresponding gain matrix is
\begin{equation*}\Gamma =
  \begin{pmatrix}
    0 & 0 & 0.9\\ 0.9 & 0 & 0.9 \\ 0 & 0.9 & 0
  \end{pmatrix}\,,
\end{equation*}
and in the summation case with linear gains the small gain condition is
$r(\Gamma) <1$, \cite{DRW07-mcss}. In our example $r(\Gamma) > 1.19$, so that
using this criterion we cannot conclude ISS of the interconnection.
\end{example}

The previous examples motivate the use of the refined small gain condition
developed in this paper for the case of different ISS characterizations. In
the next section we study properties of the gain operators and show that mixed
ISS formulations can in theory always be transformed to a maximum formulation
without losing information on the small gain condition.

\section{Gain Operators}
\label{sec:Auxilary}

In this section we prove some auxiliary results for the operators satisfying
small gain condition (\ref{small gain condition_mix}). In particular, it will
be shown that a mixed (or pure sum) ISS condition can always be reformulated
as a maximum condition in such a way that the small gain property is
preserved.\footnote{We would like to thank one of the anonymous reviewers for
  posing the question whether this is possible.}

The
following lemma recalls a fact, that was already noted in
\cite{DRW07-mcss}.
\begin{lemma}
\label{lemma_gammacircd_dcirc_gamma}
For any $\alpha\in \Kinf$ the small gain condition $D_\alpha \circ \Gamma
\not \geq \id$ is equivalent to $ \Gamma  \circ D_\alpha
\not \geq \id$.
\end{lemma}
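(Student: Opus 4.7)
The plan is to use the fact that $D_\alpha$ is a bijection on $\R_+^n$ with monotone inverse, since each diagonal entry is either $\id+\alpha \in \Kinf$ or $\id \in \Kinf$, both of which are strictly increasing bijections of $\R_+$ onto itself. Monotonicity of $D_\alpha$ (and of $D_\alpha^{-1}$) with respect to the partial order induced by the positive orthant follows immediately from the diagonal form: for $x,y \in \R_+^n$ one has $x \geq y \Longleftrightarrow D_\alpha(x) \geq D_\alpha(y)$, and $D_\alpha(0) = 0$.

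I would prove the two directions by contraposition. For the direction $\Gamma \circ D_\alpha \not\geq \id \Longrightarrow D_\alpha \circ \Gamma \not\geq \id$, I would assume the conclusion fails, i.e.\ there exists some $t \in \R_+^n \setminus \{0\}$ with $D_\alpha(\Gamma(t)) \geq t$. Apply the monotone operator $D_\alpha^{-1}$ to obtain $\Gamma(t) \geq D_\alpha^{-1}(t)$. Set $s := D_\alpha^{-1}(t)$, so that $t = D_\alpha(s)$ and $s \neq 0$ because $D_\alpha^{-1}(0) = 0$. Then $\Gamma(D_\alpha(s)) \geq s$, contradicting $\Gamma \circ D_\alpha \not\geq \id$.

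For the reverse direction, assume $\Gamma \circ D_\alpha \not\geq \id$ fails, so there is $s \in \R_+^n \setminus \{0\}$ with $\Gamma(D_\alpha(s)) \geq s$. Applying $D_\alpha$ to both sides and using its monotonicity yields $D_\alpha(\Gamma(D_\alpha(s))) \geq D_\alpha(s)$. Setting $t := D_\alpha(s) \in \R_+^n \setminus \{0\}$ gives $D_\alpha(\Gamma(t)) \geq t$, contradicting $D_\alpha \circ \Gamma \not\geq \id$.

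No step here is really a main obstacle; the entire argument rests on invertibility and monotonicity of the diagonal operator $D_\alpha$, together with the trivial observation that $D_\alpha$ maps $\R_+^n \setminus \{0\}$ onto itself. The only minor point worth stating explicitly is that this argument does not require any structural information about $\Gamma$ (in particular, it does not depend on the partition $I_\Sigma \cup I_{\max}$ or on $\Gamma$ being monotone), so the same proof as in \cite{DRW07-mcss} goes through verbatim in the mixed setting.
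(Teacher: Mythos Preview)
Your proof is correct and follows essentially the same approach as the paper: both arguments rest on the fact that $D_\alpha$ is an order-preserving bijection of $\R_+^n\setminus\{0\}$ onto itself, so that the substitution $t=D_\alpha(s)$ together with monotonicity of $D_\alpha$ and $D_\alpha^{-1}$ translates one condition into the other. The paper compresses both directions into a single pointwise equivalence, while you write out each contrapositive separately, but the content is identical.
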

\begin{proof}
  Note that $D_\alpha$ is a homeomorphism with inverse $v\mapsto
  D^{-1}_\alpha(v):=\left(D_1^{-1}(v_1),\ldots,D_n^{-1}(v_n)\right)^T$.  By
  monotonicity of $D_\alpha$ and $D_\alpha^{-1}$ we have $D_\alpha \circ
  \Gamma(v)\not\geq v$ if and only if $\Gamma(v)\not\geq
  D_\alpha^{-1}(v)$. For any $w\in\R_{+}^n$ define $v=D_\alpha(w)$. Then $\Gamma
  \circ D_\alpha(w) \not\geq w$. This proves the equivalence.
\end{proof}

For convenience let us introduce $\mu: \R_{+}^n \times\R_{+}^n
\rightarrow \R_{+}^n $ defined by
\begin{equation}\label{eq:mu}
\mu(w,v):=\left(\mu_1(w_1,v_1),\ldots,\mu_n(w_n,v_n)\right)^T,w\in\R_{+}^n,\,
v\in\R_{+}^n,
\end{equation}
where $\mu_i:\R_{+}^2 \rightarrow \R_{+}$ is such that
$\mu_i(w_i,v_i):=w_i+v_i$ for $i\in I_{\Sigma}$ and
$\mu_i(w_i,v_i):=\max\{w_i,v_i\}$ for $i\in I_{\max}$. The following
counterpart of Lemma~13 in \cite{DRW07-mcss} provides the main technical
step in the proof of the main results.
\begin{lemma}\label{lemma_w_bound}

  Assume that there exists an $\alpha \in \Kinf$ such that the operator
  $\Gamma$ as defined in \eqref{operator_gamma} satisfies $\Gamma\circ
  D_\alpha \not\geq \id$ for a diagonal operator $D_\alpha$ as defined in
  \eqref{D}. Then there exists a $\phi \in \cal{K}_{\infty}$ such that for all
  $w, v \in {\mathbb{R}}_{+}^n$,
\begin{equation}\label{phi bound lemma}
w\leq\mu(\Gamma(w),v)
\end{equation}
implies $\|w\|\leq\phi(\|v\|)$.
\end{lemma}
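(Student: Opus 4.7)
The plan is to reduce the mixed inequality $w\leq \mu(\Gamma(w),v)$ to a purely max-type inequality for the operator $D_\alpha\circ\Gamma$ and then to invoke the pure-maximum counterpart of this statement (the special case $k=n$ of the Lemma, which coincides with Lemma~13 of \cite{DRW07-mcss}), using Lemma~\ref{lemma_gammacircd_dcirc_gamma} to switch the order of composition.

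Concretely, I would first process the summation rows. For each $i\in I_\Sigma$, the hypothesis gives $w_i\leq \Gamma_i(w)+v_i$, and applying the weak triangle inequality \eqref{eq:weaktri} with $\eta=\alpha$ to the right-hand side yields
$$w_i\leq \max\bigl\{(\id+\alpha)(\Gamma_i(w)),\,(\id+\alpha^{-1})(v_i)\bigr\} = \max\bigl\{[D_\alpha\circ\Gamma(w)]_i,\,\tilde v_i\bigr\},$$
where $\tilde v_i:=(\id+\alpha^{-1})(v_i)$ and the second equality uses that $D_i=\id+\alpha$ on $I_\Sigma$. For $i\in I_{\max}$ the hypothesis $w_i\leq\max\{\Gamma_i(w),v_i\}$ is already in max form, and since $D_i=\id$ on $I_{\max}$ one may write it as $w_i\leq\max\{[D_\alpha\circ\Gamma(w)]_i,\tilde v_i\}$ with $\tilde v_i:=v_i$. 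Collecting all components gives
$$w\leq \max\bigl\{D_\alpha\circ\Gamma(w),\,\tilde v\bigr\},$$
while $\|\tilde v\|\leq \psi(\|v\|)$ for $\psi:=\id+\alpha^{-1}\in\Kinf$ (using, say, the maximum norm; equivalence of norms handles any other choice).

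Next I would invoke Lemma~\ref{lemma_gammacircd_dcirc_gamma} to pass from the hypothesis $\Gamma\circ D_\alpha\not\geq\id$ to the equivalent $D_\alpha\circ\Gamma\not\geq\id$. Setting $\Psi:=D_\alpha\circ\Gamma$, we are now in the pure-maximum setting: a monotone, continuous operator with $\Psi(0)=0$, $\Psi\not\geq\id$, and an inequality $w\leq \max\{\Psi(w),\tilde v\}$. The pure-maximum variant (Lemma~13 of \cite{DRW07-mcss}) then produces $\phi_0\in\Kinf$ with $\|w\|\leq\phi_0(\|\tilde v\|)$, and taking $\phi:=\phi_0\circ\psi\in\Kinf$ gives the claimed bound $\|w\|\leq\phi(\|v\|)$.

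The main obstacle I expect lies in applying the pure-maximum lemma to $\Psi=D_\alpha\circ\Gamma$, since on rows $i\in I_\Sigma$ the $i$-th component of $\Psi$ is $(\id+\alpha)\circ\bigl(\sum_j\gamma_{ij}(\cdot)\bigr)$, not literally of the form $\max_j\tilde\gamma_{ij}(s_j)$ to which \cite[Lemma~13]{DRW07-mcss} is tailored. Two routes resolve this. The cleanest is to verify that the proof of that lemma uses only the abstract properties of $\Psi$—namely monotonicity, continuity, $\Psi(0)=0$, unboundedness in each argument, and $\Psi\not\geq\id$—all of which $\Psi$ inherits from $\Gamma$ and $D_\alpha$; then the argument carries over verbatim. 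Alternatively, one can apply \eqref{eq:weaktri} a second time inside each sum-type row to convert the inner sum into a maximum at the cost of auxiliary $(\id+\eta_k)$-weights, chosen small enough that the small-gain property is preserved by the hypothesis; this puts us into a genuine pure-max gain operator to which \cite[Lemma~13]{DRW07-mcss} applies without any reinterpretation.
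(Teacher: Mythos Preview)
Your reduction in steps 1--2 is correct and is in fact a clean repackaging of the first step of the paper's iterative argument: the weak triangle inequality with $\eta=\alpha$ turns $w\leq\mu(\Gamma(w),v)$ into $w\leq\max\{D_\alpha\circ\Gamma(w),\tilde v\}$, and then any $w>\tilde v$ would force $w\leq D_\alpha\circ\Gamma(w)$, contradicting the small-gain condition via Lemma~\ref{lemma_gammacircd_dcirc_gamma}.

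The gap is in step~3. Route~(a) does not go through as stated: the proof of \cite[Lemma~13]{DRW07-mcss} is not purely abstract. Its induction step restricts to the index set $I_1=\{i:w_i>\tilde v_i\}$, substitutes the bounds already obtained on $I_1^c$, and needs the resulting inequality to again have the same structure on $\R_+^{|I_1|}$. For a row $i\in I_\Sigma$ of $\Psi=D_\alpha\circ\Gamma$ this substitution yields $w_i\leq(\id+\alpha)\bigl(\sum_{j\in I_1}\gamma_{ij}(w_j)+c_i\bigr)$ with $c_i$ depending only on $\tilde v$, and this is neither of the form $\max\{\Psi_{I_1I_1,i}(w_{I_1}),\text{const}\}$ nor $\Psi_{I_1I_1,i}(w_{I_1})+\text{const}$, because $(\id+\alpha)$ is not additive; the recursion does not close at the level of the abstract operator $\Psi$. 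The paper circumvents this by returning to the \emph{original} inequality $w_i\leq\Gamma_i(w)+v_i$ on each restricted index set and re-deriving the bound there---equivalently, by re-applying your step~1 reduction at every stage of the iteration. That is a valid proof, but it is then the paper's argument rather than a black-box invocation of Lemma~13. Route~(b) is no shortcut either: converting the sum rows to max rows while \emph{preserving} a small-gain condition is exactly the content of Proposition~\ref{mixedismax}, whose proof is substantial and relies on the $\Omega$-path machinery; the phrase ``chosen small enough'' hides precisely that work.
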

\begin{proof}
Without loss of generality we assume $I_{\Sigma}=\{1,\dots,k\}$
and $I_{\max}=I\setminus I_{\Sigma}$, see
Remark~\ref{rem:index_subsets}, and hence $\Gamma$ is as in
(\ref{operator_gamma_reordered}).  Fix any $v \in
{\mathbb{R}}_{+}^n$. Note that for $v=0$ there is nothing to show, as
then $w\neq 0$ yields an immediate contradiction to the small gain
condition. So assume $v\neq 0$.

We first show, that for those $w \in {\mathbb{R}}_{+}^n$ satisfying
(\ref{phi bound lemma}) at least some components of $w$ have to be
bounded.  To this end let
$\widetilde{D}:{\mathbb{R}}_{+}^n\rightarrow{\mathbb{R}}_{+}^n$ be
defined by
\[\widetilde{D}(s):=\left(s_1+\alpha^{-1}(s_1),\dots,s_k+\alpha^{-1}(s_k),s_{k+1},\dots,s_n\right)^T,\, s\in\R_{+}^n\]
and let
$s^{\ast}:=\widetilde{D}(v)$.
Assume there exists $w=\left(w_1,\dots,w_n\right)^T$ satisfying
(\ref{phi bound lemma}) and such that $w_i>s^{\ast}_i$,
$i=1,\ldots,n$. In particular, for $i\in I_\Sigma$ we have
\begin{equation}\label{bounded_components_sum}
s^{\ast}_i < w_i  \leq
\gamma_{i1}(w_1)+\ldots+\gamma_{in}(w_n)+v_i
\end{equation}
and hence from the definition of $s^{\ast}$ it follows that
\[s^{\ast}_{i}=v_i+\alpha^{-1}(v_i) <
\gamma_{i1}(w_1)+\ldots+\gamma_{in}(w_n)+v_i.
\]
And so
$ v_i<\alpha(\gamma_{i1}(w_1)+\ldots+\gamma_{in}(w_n))$.
{}From (\ref{bounded_components_sum}) it follows
\begin{equation}\label{sum2}
\begin{array}{lll}
w_i\leq\gamma_{i1}(w_1)+\ldots+\gamma_{in}(w_n)+v_i<(id+\alpha)\circ(\gamma_{i1}(w_1)+\ldots+\gamma_{in}(w_n)).
\end{array}
\end{equation}
Similarly, by the construction of $w$ and the definition of $s^\ast$
we have for $i\in I_{\max}$
\begin{equation}\label{bounded_components_max}
v_i = s^{\ast}_{i}  <  w_{i} \leq
\max\{\gamma_{i1}(w_1),\ldots,\gamma_{in}(w_{n}),v_{i}\}\,,
\end{equation}
and hence
\begin{equation}\label{max2}
w_i\leq\max\{\gamma_{i1}(w_1),\ldots,\gamma_{in}(w_{n})\}.
\end{equation}
{}From (\ref{sum2}), (\ref{max2}) we get
$w\leq D_\alpha\circ\Gamma(w)$.  By Lemma~\ref{lemma_gammacircd_dcirc_gamma}
this contradicts the assumption $\Gamma\circ D_\alpha \not\geq \id$. Hence
some components of $w$ are bounded by the respective components of
$s^1:=s^{\ast}$. Iteratively we will prove that all components of $w$ are
bounded.

Fix a $w$ satisfying (\ref{phi bound lemma}). Then $w\not
>s^1$ and so there
exists an index set $I_1\subset I$, possibly depending
on $w$, such that $w_i>s_i^1$, $i\in I_1$ and $w_i\leq s_i^1$, for
$i \in I_1^c=I \setminus I_1$.
 Note that by the first step  $I_1^c$ is nonempty.
We now renumber the coordinates so that
 \begin{eqnarray}
\label{I_1 sum}
w_i>s_i^1\, \mbox{ and } & \,w_{i} \leq \sum\limits_{j=1}^{n}
\gamma_{ij}(w_j)+v_{i}\,, &\,i=1,\dots,k_1,\\
\label{I_1 max}
w_i>s_i^1\, \mbox{ and } & \,w_{i} \leq
\max\{\max\limits_{j}\gamma_{ij}(w_j),v_{i}\}\,,& \,i=k_1+1,\dots,n_1,
\\
\label{I_1^c sum}
w_i\leq s_i^1 \,\mbox{ and }& \,w_{i} \leq \sum\limits_{j=1}^{n}
\gamma_{ij}(w_j)+v_{i},\,, & i=n_1+1,\dots,n_1+k_2
\\
\label{I_1^c max}
w_i\leq s_i^1\, \mbox{ and } & \,w_{i} \leq
\max\{\max\limits_{j}\gamma_{ij}(w_j),v_{i}\}\,,& i=n_1+k_2+1,\dots,n \,,
\end{eqnarray}
where $n_1=|I_1|$, $k_1+k_2=k$. Using
(\ref{I_1^c sum}), (\ref{I_1^c max}) in (\ref{I_1 sum}), (\ref{I_1
max}) we get
\begin{eqnarray}
\label{I_1_sum}
w_{i} \leq&
\sum\limits_{j=1}^{n_1}\gamma_{ij}(w_j)+\sum\limits_{j=n_1+1}^{n}\gamma_{ij}(s_j^1)+v_{i},\
&
i=1,\dots,k_1,\\
\label{I_1_max}
w_{i} \leq&
\max\{\max\limits_{j=1,\dots,n_1}\gamma_{ij}(w_j),\max\limits_{j=n_1+1,\dots,n}\gamma_{ij}(s_j^1),v_{i}\},& i=k_1+1,\dots,n_1\,.
\end{eqnarray}
 Define $v^1\in \R_+^{n_1}$ by
 \begin{eqnarray*}
v_i^1:=&
\sum\limits_{j=n_1+1}^{n}\gamma_{ij}(s_j^1)+v_{i}\,,& i=1,\dots,k_1\,,\\
v_i^1:=&\max\{\max\limits_{j=n_1+1,\dots,n}\gamma_{ij}(s_j^1),v_{i}\}\,,&
i=k_1+1,\dots,n_1.
 \end{eqnarray*}
 Now (\ref{I_1_sum}), (\ref{I_1_max}) take the
form:
\begin{eqnarray}
\label{I_1_sum_v}
w_{i} \leq&
\sum\limits_{j=1}^{n_1}\gamma_{ij}(w_j)+v_i^1\,,& i=1,\dots,k_1,
\\
\label{I_1_max_v}
w_{i} \leq&
\max\{\max\limits_{j=1,\dots,n_1}\gamma_{ij}(w_j),v_i^1\}\,,&
i=k_1+1,\dots,n_1.
\end{eqnarray}
Let us represent $\Gamma=\left(\begin{array}{cc}
\Gamma_{I_1I_1}&\Gamma_{I_1I_1^c}\\\Gamma_{I_1^cI_1}&\Gamma_{I_1^cI_1^c}
\end{array}\right)$ and define the maps $\Gamma_{I_1I_1}:\R_{+}^{n_1}\rightarrow \R_{+}^{n_1}$,  $\Gamma_{I_1I_1^c}:\R_{+}^{n-n_1}\rightarrow \R_{+}^{n_1}$,
$\Gamma_{I^c_1I_1}:\R_{+}^{n_1}\rightarrow \R_{+}^{n-n_1}$ and
$\Gamma_{I_1^cI_1^c}:\R_{+}^{n-n_1}\rightarrow \R_{+}^{n-n_1}$
analogous to  $\Gamma$. Let
\[D_{I_1}(s):=\left((id+\alpha)(s_{1}),...\,,(id+\alpha)(s_{k_1}),s_{{k_1+1}},...\,,s_{{n_1}}\right)^T\,.\]
{}From
$\Gamma\circ D_\alpha(s) \not\geq s$ for all $s \neq 0$, $s \in
{\mathbb{R}}_{+}^n$ it follows by considering $s=(z^T,0)^T$ that
 $\Gamma_{I_1I_1}\circ D_{I_1}(z)
\not\geq z$ for all $z \neq 0$, $z \in {\mathbb{R}}_{+}^{n_1}$.
 Using the same approach as for $w\in\R_{+}^n$ it can be proved that
some components of $w^1=\left(w_1,\dots,w_{n_1}\right)^T$ are
bounded by the respective components of $s^2 := \widetilde{D}_{I_1}(v^1)$.

 We proceed inductively, defining
\begin{equation}\label{Ij}
I_{j+1} \subsetneqq I_j,\quad I_{j+1}:=\{i \in I_j:w_i>s_i^{j+1}\},
\end{equation}
with $I_{j+1}^c:=I \setminus I_{j+1}$ and
\begin{equation}\label{s_Ik}
s^{j+1}:=\widetilde{D}_{I_j}\circ
(\mu^j(\Gamma_{I_jI_j^c}(s_{I_j^c}^j),v_{I_j})),
\end{equation}
where $\widetilde{D}_{I_j}$ is defined analogously to $\widetilde{D}$, the map
$\Gamma_{I_jI_j^c}:\R_{+}^{n-n_j}\rightarrow\R_{+}^{n_j}$ acts analogously to
$\Gamma$ on vectors of the corresponding dimension,
$s_{I_j^c}^j=(s_i^j)_{i\in{I_j^c}}$ is the restriction defined in the
preliminaries and ${\mu}^j$ is appropriately defined similar to the
definition of $\mu$.\\
The nesting (\ref{Ij}), (\ref{s_Ik}) will end after at most $n-1$ steps: there
exists a maximal $l\leq n$, such that
$$I \supsetneqq I_1\supsetneqq \ldots \supsetneqq I_l\neq\emptyset$$
and all components of $w_{I_l}$ are bounded by the corresponding
components of $s^{l+1}$. Let
$$s_\varsigma:=\max\{s^{\ast},R_{I_1}(s^2),\ldots,R_{I_{l}}(s^{l+1})\}
:=\left(\begin{array}{c}
\max\{(s^{\ast})_1,(R_{I_1}(s^2))_1,\ldots,(R_{I_{l}}(s^{l+1}))_1\}\\
\vdots\\
\max\{(s^{\ast})_n,(R_{I_1}(s^2))_n,\ldots,(R_{I_{l}}(s^{l+1}))_n\}
\end{array}\right)$$
where $R_{I_j}$ denotes the anti-projection
${\mathbb{R}}_{+}^{|I_j|}\rightarrow{\mathbb{R}}_{+}^{n}$ defined above.

By the definition of $\mu$ for all $v\in\R_{+}^n$ it holds
\[0\leq v\leq \mu(\Gamma,\id)(v):=\mu(\Gamma(v),v).\] Let the $n$-fold
composition of a map $M:\R_+^n \to \R_+^n$ of the form $M\circ\ldots\circ M$
be denoted by ${[M]}^n$.  Applying $\widetilde{D}$ we have

\begin{equation}\label{estimates_for _v}\begin{split} 0
\leq
v\leq\widetilde{D}(v)\leq\widetilde{D}\circ&(\mu(\Gamma,\id))(v)\leq\dots\leq[\widetilde{D}\circ\mu(\Gamma,\id)]^n(v).
\end{split}
\end{equation}
{}From (\ref{s_Ik}) and (\ref{estimates_for _v}) for $w$ satisfying
(\ref{phi bound lemma}) we have
$w\leq s_\varsigma\leq[\widetilde{D}\circ\mu(\Gamma,\id)]^n(v)$.
The term on the right-hand side does not depend on any particular
choice of nesting of the index sets. Hence every $w$ satisfying
(\ref{phi bound lemma}) also satisfies
$w\leq{[\widetilde{D}\circ\mu(\Gamma,\id)]}^n({|v|}_{\max},\ldots,{|v|}_{\max})^T$
and taking the maximum-norm on both sides yields
${|w|}_{\max}\leq\phi({|v|}_{\max})$
for some function $\phi$ of class $\cal{K}_{\infty}$. For example,
$\phi$ can be chosen as

\[\phi(r):=\max\{({[\widetilde{D}\circ\mu(\Gamma,\id)]}^n(r,\ldots,r))_1,\ldots,({[\widetilde{D}\circ\mu(\Gamma,\id)]}^n(r,\ldots,r))_n\}.\]
This completes the proof of the lemma.\end{proof}

We also introduce the important notion of $\Omega$-paths \cite{DRW09}.  This
concept is useful in the construction of Lyapunov functions and will also be
instrumental in obtaining a better understanding of the relation between max
and sum small gain conditions.

\begin{definition}
A continuous path $\sigma\in\K_{\infty}^n$ is called an $\Omega$-path
with respect to $\Gamma$ if
\begin{enum}
\item for each $i$, the function $\sigma^{-1}_i$ is locally Lipschitz
  continuous on $(0,\infty)$;
\item for every compact set $P\subset(0,\infty)$ there are finite
constants $0<c<C$ such that for all points of differentiability of
$\sigma^{-1}_i$ and $i=1,\ldots,n$ we have
\begin{equation}\label{sigma cond 1}
0<c\leq(\sigma^{-1}_i)'(r)\leq C, \quad \forall r\in P
\end{equation}
\item for all $r>0$ it holds that $\Gamma(\sigma(r))<\sigma(r)$.
\end{enum}
\end{definition}

By \cite[Theorem~8.11]{DRW09} the existence of an $\Omega$-path $\sigma$
follows from the small gain condition \eqref{small gain condition_max}
provided an irreducibility condition is satisfied. To define this notion we
consider the directed graph $G({\cal V},{\cal E})$ corresponding to $\Gamma$
with nodes ${\cal V}=\{1,\ldots,n\}$. A pair $(i,j)\in {\cal V}\times{\cal V}$
is an edge in the graph if $\gamma_{ij} \neq 0$. Then $\Gamma$ is called
irreducible if the graph is strongly connected, see e.g. the appendix in
\cite{DRW07-mcss} for further discussions on this topic.

We note that if $\Gamma$ is reducible, then it may be brought into upper
block triangular form by a permutation of the indices
\begin{equation}\label{reducible_gamma}
{\Gamma}=\left(\begin{array}{cccc}
\Upsilon_{11}&\Upsilon_{12}&\ldots&\Upsilon_{1d}\\
0&\Upsilon_{22}&\ldots&\Upsilon_{2d}\\
\vdots&&\ddots&\vdots\\
0&\ldots&0&\Upsilon_{dd}
\end{array}
\right)
\end{equation}
where each block
$\Upsilon_{jj}\in(\K_{\infty}\cup\{0\})^{d_j\times d_j},
j=1,\ldots,d$, is either irreducible or 0.

The following is an immediate corollary to \cite[Theorem~8.11]{DRW09}, where
the result is only implicitly contained.
\begin{corollary}
  \label{c:Opath}
  Assume that $\Gamma$ defined in \eqref{operator_gamma} is
  irreducible. Then $\Gamma$ satisfies the small gain condition if and
  only if an $\Omega$-path $\sigma$ exists for $D\circ \Gamma$.
\end{corollary}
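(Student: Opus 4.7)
The strategy is to deduce the corollary as a direct application of \cite[Theorem~8.11]{DRW09}, not to $\Gamma$ itself, but to the composed operator $D_\alpha\circ\Gamma$ (equivalently $\Gamma\circ D_\alpha$, cf.\ Lemma~\ref{lemma_gammacircd_dcirc_gamma}). That theorem provides, for an irreducible monotone aggregation operator, the equivalence between the small gain condition $\tilde{\Gamma}\not\geq\id$ and the existence of an $\Omega$-path for $\tilde{\Gamma}$; our task is essentially to verify that $\tilde{\Gamma}:=D_\alpha\circ\Gamma$ meets its hypotheses and that the resulting conclusion is exactly the one we want.

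First I would verify that the composed operator inherits the structural properties of $\Gamma$. Since $D_\alpha$ is diagonal with each component either the identity or the homeomorphism $\id+\alpha\in\Kinf$, the zero pattern of $D_\alpha\circ\Gamma$ coincides with the zero pattern of $\Gamma$. Consequently the directed graph $G(\mathcal{V},\mathcal{E})$ associated with $D_\alpha\circ\Gamma$ is the same graph as for $\Gamma$, so irreducibility is preserved. Moreover, the aggregation structure is unchanged: rows indexed by $I_\Sigma$ still aggregate by summation and rows indexed by $I_{\max}$ still aggregate by maximization, because $D_\alpha$ only post-composes each row by a scalar $\Kinf$ map.

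Next I would translate the small gain condition. By Lemma~\ref{lemma_gammacircd_dcirc_gamma}, $\Gamma\circ D_\alpha\not\geq\id$ is equivalent to $D_\alpha\circ\Gamma\not\geq\id$, and the latter is precisely the small gain condition for the operator $D_\alpha\circ\Gamma$ in the monotone-aggregation framework of \cite{DRW09}. Applying \cite[Theorem~8.11]{DRW09} to $D_\alpha\circ\Gamma$ then yields in both directions: the condition $D_\alpha\circ\Gamma\not\geq\id$ holds if and only if there exists $\sigma\in\Kinf^n$ with the required Lipschitz/derivative bounds such that $D_\alpha\circ\Gamma(\sigma(r))<\sigma(r)$ for all $r>0$, which is exactly the definition of an $\Omega$-path for $D_\alpha\circ\Gamma$.

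The main obstacle I anticipate is bibliographic rather than mathematical: the passage preceding the corollary only states the ``if'' direction of \cite[Theorem~8.11]{DRW09} and explicitly only in the pure max case, so some care is needed to confirm that the source in fact proves the biconditional for general monotone aggregation operators (covering the mixed sum/max structure of our $\Gamma$), and that the graph-theoretic notion of irreducibility used there agrees with the one we use here. Once these identifications are made, no further analytic work is required, and the corollary follows by specializing the cited theorem to $\tilde{\Gamma}=D_\alpha\circ\Gamma$.
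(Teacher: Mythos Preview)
Your forward direction (small gain $\Rightarrow$ existence of an $\Omega$-path) coincides with the paper's approach: both invoke \cite[Theorem~8.11]{DRW09} applied to the irreducible operator $D_\alpha\circ\Gamma$, after noting that the zero pattern and hence irreducibility are preserved under composition with the diagonal homeomorphism $D_\alpha$.

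The difference lies in the converse. You propose to extract the implication ``$\Omega$-path $\Rightarrow$ small gain'' from the same cited theorem, while acknowledging the bibliographic risk that the source may only state one direction. The paper does not rely on the reference here; instead it gives a direct three-line argument: suppose an $\Omega$-path $\sigma$ exists for $D\circ\Gamma$ but $D\circ\Gamma(s)\geq s$ for some $s\neq 0$; by continuity and unboundedness of $\sigma$ choose $\tau>0$ with $\sigma(\tau)\geq s$ but $\sigma(\tau)\not> s$; then monotonicity gives $s\leq D\circ\Gamma(s)\leq D\circ\Gamma(\sigma(\tau))<\sigma(\tau)$, a contradiction. This elementary argument is fully self-contained and avoids precisely the bibliographic uncertainty you flag --- indeed the paper notes that the corollary is ``only implicitly contained'' in \cite{DRW09}, suggesting the biconditional is not stated there. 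Your plan is not wrong, but the paper's route for the converse is both shorter and independent of external details.
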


\begin{proof}
  The hard part is the implication that the small gain condition
  guarantees the existence of an $\Omega$-path, see \cite{DRW09}. For
  the converse direction assume that an $\Omega$-path exists for $D\circ
  \Gamma$ and that for a certain $s \in \R^n_+, s\neq 0$ we have $D\circ
  \Gamma(s) \geq s$. By continuity and unboundedness of $\sigma$ we may
  choose a $\tau>0$ such that $\sigma(\tau) \geq s$ but $\sigma(\tau)
  \not > s$. Then $s \leq D\circ \Gamma(s) \leq D\circ
  \Gamma(\sigma(\tau)) < \sigma(\tau)$. This contradiction proves the
  statement.
\end{proof}

\subsection{From Summation to Maximization}
\label{sumtomax}

We now use the previous consideration to show that an alternative approach
is possible for the treatment of the mixed ISS formulation, which
consists of transforming the complete formulation in a maximum
formulation. Using the weak triangle inequality
\eqref{eq:weaktri} iteratively the conditions in \eqref{subssystems ISS_sum}
may be transformed into conditions of the form \eqref{subssystems ISS_max}
with
\begin{eqnarray}\label{subssystems ISS_max-trans}
|x_i(t)| &\leq&
\beta_i(|x_i(0)|,t)+\sum\limits_{j=1}^{n}\gamma_{ij}({\|x_{j[0,t]}\|}_{\infty})+\gamma_i({\|u\|}_{\infty})\\
&\leq&\max\{\tilde{\beta}_i(|x_i(0)|,t),\max\limits_j\{\tilde{\gamma}_{ij}({\|x_{j[0,t]}\|}_{\infty})\},\tilde{\gamma}_i({\|u\|}_{\infty})\}
\label{subssystems ISS_max-trans2}
\end{eqnarray}
for $i\in I_{\Sigma}$. To get a general formulation we let
$j_1,\ldots,j_{k_i}$ denote the indices $j$ for which $\gamma_{ij}\neq
0$. Choose auxiliary functions
$\eta_{i0}, \ldots, \eta_{ik_i} \in \Kinf$ and
define $\chi_{i0} :=(\id +
\eta_{i0})$ and  
$\chi_{il} = (\id +
\eta_{i0}^{-1}) \circ \ldots \circ (\id + \eta_{i(l-1)}^{-1})\circ (\id +
\eta_{il})$, $l=1,\ldots, k_i$, and $\chi_{i(k_{i}+1)}=(\id +
\eta_{i0}^{-1})\circ \dots \circ (\id +
\eta_{ik_i}^{-1})$. Choose
a permutation $\pi_i:\{0,1,\ldots,k_i+1\} \to
\{0,1,\ldots, k_i+1\}$  and define
\begin{equation}
\label{eq:tildegains}
  \tilde{\beta}_i := \chi_{i\pi_i(0)}\circ \beta_i\,,\quad \tilde{\gamma}_{ij_l} :=
  \chi_{i{\pi_i(l)}}\circ \gamma_{ij_l}\,, \,\, l=1,\ldots,k_i
   \,,\quad \tilde{\gamma}_i:= \chi_{i\pi_i(k_i+1)}\circ \gamma_i  \,,
\end{equation}
and of course $\tilde{\gamma}_{ij} \equiv 0$,
$j\notin \{ j_1,\ldots,j_{k_1}\}$. In this manner the inequalities
\eqref{subssystems ISS_max-trans} are valid and a maximum ISS formulation is
obtained. Performing this for every $i\in I_\Sigma$ we obtain an operator
$\tilde{\Gamma} : \R_n^+ \to \R_n^+$ defined by
\begin{equation}
  \label{eq:tildeGamma}
  \left(\tilde\Gamma_1(s),\ldots,\tilde\Gamma_n(s)\right)^T\,,
\end{equation}
where the functions $\tilde\Gamma_i:\R_{+}^n\rightarrow\R_{+}$ are given by
$\tilde\Gamma_i(s):=\max\{
\tilde{\gamma}_{i1}(s_1),\dots,\tilde{\gamma}_{in}(s_n)\}$ for $i\in
I_{\Sigma}$ and
$\tilde\Gamma_i(s):=\max\{\gamma_{i1}(s_1),\dots,\gamma_{in}(s_{n})\}$ for $i\in
I_{\max}$. Here the $\tilde{\gamma}_{ij}$'s are given by
\eqref{eq:tildegains}, whereas the $\gamma_{ij}$'s are the original gains.

As it turns out the permutation is not really necessary and it is sufficient
to peel off the summands one after the other. We will now show that given a
gain operator $\Gamma$ with a mixed or pure sum formulation which
satisfies the small gain condition $D\circ \Gamma \not \geq \id$, it is always
possible to switch to a maximum formulation which also satisfies the
corresponding small gain condition $\tilde{\Gamma} \not \geq \id$.
%
%
In the following statement $k_i$ is to be understood as defined just after
\eqref{subssystems ISS_max-trans2}.
\begin{proposition}
  \label{mixedismax}
  Consider a gain operator $\Gamma$ of the form
  \eqref{operator_gamma}. Then the following two statements are equivalent
  \begin{enum}
  \item the small gain condition \eqref{small gain condition_mix} is satisfied,
  \item for each $i \in I_\Sigma$ there exist $\eta_{i,0}, \ldots,
    \eta_{i,(k_i+1)} \in \Kinf$, such that the corresponding small gain
    operator $\tilde{\Gamma}$ satisfies the small gain condition
    \eqref{small gain condition_max}.
  \end{enum}
\end{proposition}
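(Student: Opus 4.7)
My plan is to prove both implications of the equivalence separately, using the weak triangle inequality \eqref{eq:weaktri}---iteratively applied to produce the multipliers $\chi_{i,l}$ of \eqref{eq:tildegains}---as the main tool. For the direction (ii)$\Rightarrow$(i), I would first establish the pointwise comparison $\Gamma(s)\leq\tilde{\Gamma}(s)$: applying the iterated weak triangle to the $(k_i+2)$-term sum underlying the definition of the $\chi_{i,l}$ and then setting the $\beta_i$- and $\gamma_i$-summands to zero yields $\sum_l\gamma_{ij_l}(s_{j_l})\leq\max_l\chi_{i\pi_i(l)}(\gamma_{ij_l}(s_{j_l}))$ for $i\in I_\Sigma$, while equality is immediate for $i\in I_{\max}$; hence $\tilde{\Gamma}\not\geq\id$ already gives $\Gamma\not\geq\id$. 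To upgrade to the mixed condition, I would exploit that each $\chi_{i\pi_i(l)}$ with $l\geq 1$ strictly dominates $\id$ and choose $\alpha\in\Kinf$ small enough that $\gamma_{ij_l}\circ(\id+\alpha)\leq\chi_{i\pi_i(l)}\circ\gamma_{ij_l}$ pointwise for every relevant pair $(i,j_l)$ with $i\in I_\Sigma$, possibly shrinking $\alpha$ further to absorb the combinatorial factor $k_i$ arising from $\sum\leq k_i\max$.

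For the direction (i)$\Rightarrow$(ii), the substantive one emphasized in the paper, I plan to peel off the summands of $\Gamma_i$ one at a time via the weak triangle inequality. The cleanest route passes through $\Omega$-paths: after reducing to irreducible $\Gamma$ via the block decomposition \eqref{reducible_gamma}, Corollary~\ref{c:Opath} yields an $\Omega$-path $\sigma$ satisfying $D_\alpha\circ\Gamma(\sigma(r))<\sigma(r)$ for every $r>0$; I would then choose the $\eta_{i,j}$'s and permutations $\pi_i$ so that the iterated weak triangle inequality rewrites $\Gamma_i(\sigma(r))$ as $\tilde{\Gamma}_i(\sigma(r))$ without destroying the strict inequality on the compact range of $\sigma$ at each fixed $r$, whence $\sigma$ is also an $\Omega$-path for $\tilde{\Gamma}$ and the converse direction of Corollary~\ref{c:Opath} delivers $\tilde{\Gamma}\not\geq\id$.

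The main obstacle lies in this second direction: for $l\geq 1$ each $\chi_{i,l}$ contains factors $\id+\eta_{i,j}^{-1}$ that blow up as the corresponding $\eta_{i,j}$ shrinks, so the $k_i+2$ multipliers cannot all be made simultaneously small, and a naive uniform choice of $\eta$'s fails when $k_i\geq 2$. The role of the permutation $\pi_i$ is precisely to absorb this tension: the two unavoidably-large multipliers at the ``outer'' ends of the weak-triangle chain should be assigned to the slots of $\beta_i$ and $\gamma_i$---which do not enter $\tilde{\Gamma}$---while the strict gap in the mixed small gain condition along $\sigma$ provides the reservoir from which to pay for the remaining $k_i$ multipliers sitting on the actual gain edges. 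Balancing these choices against the given $\alpha$ is the delicate technical point of the argument.
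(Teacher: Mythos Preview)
Your overall architecture for (i)$\Rightarrow$(ii) is the paper's: reduce to irreducible $\Gamma$, invoke Corollary~\ref{c:Opath} to obtain an $\Omega$-path $\sigma$, then manufacture the $\eta_{il}$ so that $\sigma$ remains an $\Omega$-path for $\tilde\Gamma$. Where your plan diverges is in the mechanism you propose to defeat the blow-up of the $\chi$'s. You suggest that the permutation $\pi_i$ is the crucial device, parking the two ``large'' multipliers on the $\beta_i$ and $\gamma_i$ slots that do not enter $\tilde\Gamma$. The paper explicitly remarks that the permutation is \emph{not} needed, and the structure of the $\chi_{il}$ shows why your idea cannot work as stated: each $\chi_{il}$ with $1\le l\le k_i$ contains its own factor $(\id+\eta_{il})$, so there is no choice of $\eta$'s making exactly two of the $k_i+2$ multipliers large and the rest close to $\id$. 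The paper's resolution is different and more constructive: along the $\Omega$-path one has $(\id+\alpha)\circ\sum_l \gamma_{ij_l}\circ\sigma_{j_l}\circ\sigma_i^{-1}<\id$, which leaves room to pick $\hat\gamma_{ij_l}>\gamma_{ij_l}\circ\sigma_{j_l}\circ\sigma_i^{-1}$ with $\id-\sum_l\hat\gamma_{ij_l}\in\Kinf$; then setting $\eta_{il}:=(\id-\sum_{k\le l}\hat\gamma_{ij_k})\circ\hat\gamma_{ij_l}^{-1}$ makes the products telescope so that $\chi_{il}=\hat\gamma_{ij_l}^{-1}$ exactly, whence $\chi_{il}\circ\gamma_{ij_l}\circ\sigma_{j_l}\circ\sigma_i^{-1}<\id$ for every $l$. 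This telescoping choice is the missing idea in your plan.

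Your route for (ii)$\Rightarrow$(i) has a genuine gap. You propose to pick $\alpha\in\Kinf$ so small that $\gamma_{ij_l}\circ(\id+\alpha)\le\chi_{i\pi_i(l)}\circ\gamma_{ij_l}$ for all relevant $(i,j_l)$. But this amounts to $\alpha\le\gamma_{ij_l}^{-1}\circ\chi_{i\pi_i(l)}\circ\gamma_{ij_l}-\id$, and the right-hand side need not dominate any $\Kinf$ function (take $\chi(r)=r+\min\{r,1\}$, $\gamma=\id$); so such an $\alpha$ may fail to exist. Even granting the componentwise bound, summing and invoking $\sum\le k_i\max$ only yields $(\Gamma\circ D_\alpha)_i\le k_i\,\tilde\Gamma_i$, and ``shrinking $\alpha$'' cannot remove the factor $k_i$. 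The paper avoids both issues by working with $D_\alpha$ on the \emph{left}: it constructs $\alpha_i\in\Kinf$ explicitly from the given $\eta$'s (equation \eqref{eq:alpha}) and verifies, by re-tracing the weak-triangle splits, the scalar inequality $(\id+\alpha_i)\circ\sum_j\gamma_{ij}\le\max_l\chi_{i\pi_i(l)}\circ\gamma_{ij_l}=\tilde\Gamma_i$; setting $\alpha=\min_i\alpha_i$ then gives $D_\alpha\circ\Gamma\le\tilde\Gamma$, and Lemma~\ref{lemma_gammacircd_dcirc_gamma} converts this to the form \eqref{small gain condition_mix}.
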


\begin{remark}
  We note that in the case that a system \eqref{subsystems} satisfies a mixed
  ISS condition with operator $\Gamma$, then the construction in
  \eqref{subssystems ISS_max-trans} shows that the ISS condition is also
  satisfied in the maximum sense with the operator $\tilde\Gamma$. On the
  other hand the construction in the proof does not guarantee that if the ISS
  condition is satisfied for the operator $\tilde\Gamma$ then it will also be
  satisfied for the original $\Gamma$.
\end{remark}
\begin{proof}
  ``$\Rightarrow$'': We will show the statement under the condition that
  $\Gamma$ is irreducible. In the reducible case we may assume that $\Gamma$
  is in upper block triangular form \eqref{reducible_gamma}. In each of the
  diagonal blocks we can perform the transformation described below and the
  gains in the off-diagonal blocks are of no importance for the small gain
  condition.

  In the irreducible case we may apply Corollary~\ref{c:Opath} to obtain a
  continuous map $\sigma : [0,\infty) \to \R^n_+$, where $\sigma_i\in\Kinf$
  for every component function of $\sigma$ and so that
 \begin{equation}
 \label{path}
   D \circ \Gamma \circ \sigma (\tau) < \sigma(\tau) \,,\quad \text{for all
   }\quad \tau >0.
 \end{equation}
Define the homeomorphism $T:\R^n_+ \to \R^n_+$, $T:s \mapsto
(\sigma_1(s_1),\ldots, \sigma_n(s_n))$. Then $T^{-1} \circ D\circ \Gamma \circ
T \not\geq \id$ and we have by \eqref{path} for $e = \sum_{i=1}^n e_i$, that
\begin{equation*}
  T(\tau e) = \sigma(\tau) > D \circ \Gamma \circ \sigma(\tau) = D \circ
  \Gamma \circ T(\tau e)\,,
\end{equation*}
so that for all $\tau>0$
\begin{equation}
  \label{eq:newpath}
  T^{-1} \circ D \circ \Gamma \circ T (\tau e) < \tau e \,.
\end{equation}
We will show that $T^{-1} \circ \tilde{\Gamma} \circ T (\tau e) < \tau e$ for
an appropriate choice of the functions $\eta_{ij}$. By the converse direction
of Corollary~\ref{c:Opath} this shows that $T^{-1}
\circ \tilde{\Gamma} \circ T \not\geq \id $ and hence $\tilde{\Gamma} \not\geq
\id $ as desired.

Consider now a row corresponding to $i\in I_\Sigma$ and let $j_1,\ldots,
j_{k_i}$ be the indices for which $\gamma_{ij}\neq0$. For this row
\eqref{eq:newpath} implies
\begin{equation}
  \sigma_i^{-1} \circ (\id + \alpha) \circ \left( \sum_{j\neq i} \gamma_{ij}
  (\sigma_j(r)) \right) < r \,,\quad \forall r>0\,,
\end{equation}
or equivalently
\begin{equation}
  (\id + \alpha) \circ \left( \sum_{j\neq i} \gamma_{ij}
  \circ \sigma_j\circ \sigma_i^{-1} \right)  \circ \sigma_i(r) < \sigma_i(r)
  \,,\quad \forall r>0\,.
\end{equation}
This shows that
\begin{equation}
  (\id + \alpha) \circ \left( \sum_{j\neq i} \gamma_{ij}
   \circ \sigma_j\circ \sigma_i^{-1}  \right) < \id
  \,,\quad \text{on } (0,\infty)\,.
\end{equation}
Note that this implies that $\left(\id - \sum_{j\neq i} \gamma_{ij} \circ
  \sigma_j\circ \sigma_i^{-1}\right) \in \Kinf$ because $\alpha \in \Kinf$. We
may therefore choose $\hat{\gamma}_{ij} > \gamma_{ij} \circ \sigma_j\circ
\sigma_i^{-1}, j=j_1,\ldots,j_{k_i}$ in such a manner that
\begin{equation*}
  \id - \sum_{l=1}^{k_i} \hat{\gamma}_{ij_l}  \quad \in \Kinf \,.
\end{equation*}
Now define for $l=1,\ldots,k_i$
\begin{equation*}
  \eta_{il} := \left( \id - \sum_{k\leq l} \hat{\gamma}_{ij_k}
  \right)\circ \hat{\gamma}_{ij_l}^{-1} \quad \in \Kinf\,.
\end{equation*}
It is straightforward to check that
\begin{equation*}
  (\id + \eta_{il}) = \left( \id - \sum_{k< l} \hat{\gamma}_{ij_k}
  \right)\circ \hat{\gamma}_{ij_l}^{-1}\,,\quad
  (\id + \eta_{il}^{-1}) = \left( \id - \sum_{k< l} \hat{\gamma}_{ij_k}
  \right) \circ \left( \id - \sum_{k\leq l} \hat{\gamma}_{ij_k}
  \right)^{-1} \,.
\end{equation*}
With $\chi_{il}:= (\id + \eta_{i1}^{-1}) \circ \ldots \circ (\id +
\eta_{i({l-1})}^{-1}) \circ (\id + \eta_{il})$ it follows that
\begin{equation*}
  \chi_{il} \circ \gamma_{ij_l} \circ \sigma_{j_l}\circ
\sigma_i^{-1}= (\id + \eta_{i1}^{-1}) \circ \ldots \circ
(\id + \eta_{i,{l-1}}^{-1}) \circ (\id +
  \eta_{il}) \circ \gamma_{ij_l} \circ \sigma_{j_l}\circ
\sigma_i^{-1}= \hat{\gamma}_{ij_l}^{-1} \circ \gamma_{ij_l} \circ \sigma_{j_l}
\circ
\sigma_i^{-1}< \id\,.
\end{equation*}
This shows that it is possible to choose $\eta_{ij}, i\in I_\Sigma$ such that
all the entries in $T^{-1} \circ \tilde{\Gamma} \circ T$ are smaller than the
identity. This shows the assertion.

``$\Leftarrow$'': To show the converse direction let the small gain condition
\eqref{small gain condition_max} be satisfied for the operator
$\tilde{\Gamma}$. Consider $i\in I_{\Sigma}$.

We consider the following two cases for the permutation $\pi$ used in
\eqref{eq:tildegains}. Define $p:=\min\{\pi(0),\pi(k_i+1)\}$.  In the first
case $\{\pi(0),\pi(k_i+1)\} =\{k_i,k_i+1\}$, i.e., $ \pi(l)<p, \forall\
l\in\{1,\ldots,k_i\}$.
Alternatively, the second case is $\exists l\in\{1,\ldots,k_i\}: \pi(l)>p$.

We define $\alpha_i \in \K_\infty$ by
\begin{equation}\label{eq:alpha}
  \alpha_i:=\left\{\begin{array}{ll}
      \eta_{ip}^{-1}\circ\sum\limits_{\pi(l)>p}
       \gamma_{ij_l}\circ\left(\sum\limits_{j}
        \gamma_{ij}\right)^{-1}, & \mbox{ if } \exists j\in\{1,\ldots,k_i\}:
      \pi(j)>p\,, \\
      \eta_{i,p-1}\circ\gamma_{i,j_{\pi^{-1}(p-1)}}\circ\left(\sum\limits_{j}
        \gamma_{ij}\right)^{-1}, & \mbox { if } \forall j\in\{1,\ldots,k_i\}\quad
      \pi(j)<p \,.
 \end{array}
\right.
  \end{equation}
  Consider the $i$th row of $D\circ\Gamma$ and the case $\exists
  j\in\{1,\ldots,k_i\}: \pi(j)>p$. (Note that for no $l\in \{1,\ldots,k_i\}$
  we have $\pi(l) =p$).
\begin{equation}\label{1gamma_tilde_1}
\begin{array}{rcl}
  (\id+\alpha_i)\circ\sum\limits_{j} \gamma_{ij}&=&\sum\limits_{j} \gamma_{ij}+\alpha_i\circ\sum\limits_{j} \gamma_{ij}\\
  &=&\sum\limits_{j} \gamma_{ij}+\eta_{ip}^{-1}\circ\sum\limits_{
    \pi(l)>p}\gamma_{ij_l}\circ\left(\sum\limits_{j}
    \gamma_{ij}\right)^{-1}\circ\sum\limits_{j} \gamma_{ij}\\
  &=&\sum\limits_{j}
  \gamma_{ij}+\eta_{ip}^{-1}\circ\sum\limits_{\pi(l)>p}\gamma_{ij_l}\\
&=&\sum\limits_{\pi(l)<p}\gamma_{ij_l}+(\id+\eta_{ip}^{-1})\circ\sum\limits_{\pi(l)>p}\gamma_{ij_l} \,.
\end{array}
\end{equation}
Applying the weak triangle inequality \eqref{eq:weaktri} first to the
rightmost sum in the last line of \eqref{1gamma_tilde_1} and then to the
remaining sum we obtain
\begin{eqnarray}
\nonumber
&&\sum\limits_{\pi(l)<p}\gamma_{ij_l}+(\id+\eta_{ip}^{-1})\circ\sum\limits_{\pi(l)>p}\gamma_{ij_l}\\
\nonumber
&\leq&\sum\limits_{\pi(l)<p-1}\gamma_{ij_l}+\max\{(\id+\eta_{i,p-1})\circ\gamma_{i,\pi^{-1}(p-1)},\\
\nonumber
&&(\id+\eta_{i,p-1}^{-1})\circ(\id+\eta_{ip}^{-1})\circ
\max\limits_{\pi(l)>p}\{(\id+\eta_{i,{p+1}}^{-1})\circ\ldots\circ(\id+\eta_{i,\pi(l)-1}^{-1})
\circ(\id+\eta_{i\pi(l)})\circ\gamma_{ij_l}\}\}\\
&\leq&\ldots \quad
\leq \max\limits_{l}\{\chi_{i\pi(l)}\circ\gamma_{ij_l}\} \,.\label{1gamma_tilde_2}
\end{eqnarray}
The last expression is the defining equation for
$\widetilde{\Gamma}_i(s_1,\ldots,s_n) =
\max\limits_{l=1,\ldots,k_i}\{\chi_{i\pi(l)}\circ\gamma_{ij_l}(s_{j_l})\}$. Thus from
\eqref{1gamma_tilde_1}, \eqref{1gamma_tilde_2} we obtain
$\widetilde{\Gamma}_i\geq (D\circ\Gamma)_i$.

Consider now the case $\forall l\in\{1,\ldots,k_i\}\quad \pi(l)<p$.
A similar approach shows that $\widetilde{\Gamma}_i\geq (D\circ\Gamma)_i$.
Following the same steps as in the first case we obtain
\begin{eqnarray}\label{2gamma_tilde_1}
\nonumber
(\id+\alpha_i)\circ\sum\limits_{j} \gamma_{ij}&=
&\sum\limits_{j}
\gamma_{ij}+\eta_{i,p-1}\circ\gamma_{i,j_{\pi^{-1}(p-1)}}\\
\nonumber
&=&\sum\limits_{\pi(l)<p-1}\gamma_{ij_l}+(\id+\eta_{i,p-1})\circ\gamma_{i,j_{\pi^{-1}(p-1)}}\\
&\leq&\sum\limits_{\pi(l)<p-2}\gamma_{ij_l}+\max\{(\id+\eta_{i,p-2})\circ\gamma_{ij_{\pi^{-1}(p-2)}},\\
\nonumber
&&\phantom{\sum\limits_{\pi(l)<p-2}\gamma_{ij}+\max\{}(\id+\eta_{i,p-2}^{-1})\circ(\id+\eta_{i,(p-1)})\circ\gamma_{i,j_{\pi^{-1}(p-1)}}\}\\
\nonumber
&\leq &\ldots \quad \leq \max\limits_{l}\{\chi_{i\pi(l)}\circ\gamma_{ij_l}\} \,.
\end{eqnarray}

Again from \eqref{2gamma_tilde_1} $\widetilde{\Gamma}_i\geq (D\circ\Gamma)_i$.

Taking $\alpha=\min{\alpha_i}\in\K_{\infty}$ it holds that $\widetilde{\Gamma}\geq D\circ\Gamma$. Thus if $\widetilde{\Gamma}\not\geq\id$, then $D\circ\Gamma\not\geq\id$.
\end{proof}---------

\section{Small gain theorem}\label{sec:SGT}
We now turn back to the question of stability. In order to prove ISS of
(\ref{subsystems}) we use the same approach as in \cite{DRW07-mcss}. The main
idea is to prove that the interconnection is GS and AG and then to use the
result of \cite{SoW96} by which AG and GS systems are ISS.

So, let us first prove small gain theorems for GS and AG.

\begin{theorem}\label{theorem_gs}
Assume that each subsystem of (\ref{subsystems}) is GS
 and a gain matrix is given by $\Gamma=(\hat{\gamma}_{ij})_{n\times n}$. If there
exists $D$ as in (\ref{D}) such that $\Gamma\circ D(s) \not\geq s$
for all $s\neq 0,s\geq 0$ , then the system (\ref{whole
system}) is GS.
\end{theorem}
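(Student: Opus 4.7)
My plan is to reduce the statement to Lemma~\ref{lemma_w_bound}, which does the heavy lifting of turning an implicit mixed sum/max inequality into an a priori bound. To set the stage, I would fix any initial state $x(0)$ and input $u\in L_\infty(\R_+,\R^m)$ and, using forward completeness of \eqref{whole system}, define the finite, nondecreasing quantities $w_i(t):=\|x_{i[0,t]}\|_\infty$ for $i=1,\dots,n$ and $t\ge 0$. The first task is to verify that $w$ satisfies an inequality of the form $w(t)\le\mu(\Gamma(w(t)),v)$ in the notation of~\eqref{eq:mu}. To do so, I would apply the GS estimates~\eqref{subssystems GS_sum} and~\eqref{subssystems GS_max} at an arbitrary time $s\in[0,t]$, use monotonicity of the $\hat\gamma_{ij}$ together with $\|x_{j[0,s]}\|_\infty\le w_j(t)$, and then take the supremum over $s\in[0,t]$ on the left. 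The natural choice is $v_i:=\sigma_i(|x_i(0)|)+\hat\gamma_i(\|u\|_\infty)$ for $i\in I_\Sigma$ and $v_i:=\max\{\sigma_i(|x_i(0)|),\hat\gamma_i(\|u\|_\infty)\}$ for $i\in I_{\max}$.

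Once this implicit inequality is in hand, the small gain hypothesis $\Gamma\circ D\not\ge\id$ lets me invoke Lemma~\ref{lemma_w_bound} to produce a $\phi\in\Kinf$ with $|w(t)|_{\max}\le\phi(|v|_{\max})$ valid for all $t\ge 0$. Since $|x(t)|_{\max}\le|w(t)|_{\max}$ and $|v|_{\max}$ is bounded by a $\K$-function of $|x(0)|$ plus a $\K$-function of $\|u\|_\infty$, I would conclude by applying the elementary bound $\phi(a+b)\le\phi(2a)+\phi(2b)$, valid for any $\phi\in\Kinf$ and $a,b\ge 0$, to split the estimate into the two pieces required by \eqref{gs_sum}, and then invoke norm equivalence on $\R^n$ to phrase everything in terms of the original norm.

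Essentially all the analytic content is absorbed by Lemma~\ref{lemma_w_bound}, so the step needing the most care here is the verification of the implicit inequality $w(t)\le\mu(\Gamma(w(t)),v)$. The potential obstacle is that the same unknown $w_j(t)$ appears on both sides, but because each $w_j$ is nondecreasing and the gains are monotone, no fixed-point argument is needed: enlarging $\|x_{j[0,s]}\|_\infty$ to $w_j(t)$ for $s\le t$ only increases the right-hand side. The minor technical point that $w_i(t)$ must be finite on compact time intervals is precisely what the standing forward-completeness assumption supplies.
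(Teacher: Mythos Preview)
Your proposal is correct and follows essentially the same route as the paper's proof: take the supremum over $[0,t]$ of the subsystem GS estimates to obtain $w\le\mu(\Gamma(w),v)$ with $w_i=\|x_{i[0,t]}\|_\infty$ and $v=\mu(\sigma(|x(0)|),\hat\gamma(\|u\|_\infty))$, invoke Lemma~\ref{lemma_w_bound}, and then split via $\phi(a+b)\le\phi(2a)+\phi(2b)$. Your write-up is in fact slightly more explicit than the paper's about the monotonicity justification for the implicit inequality and about the role of forward completeness.
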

\begin{proof} Let us take the supremum over $\tau \in [0,t]$ on both sides of
(\ref{subssystems GS_sum}), (\ref{subssystems GS_max}). For $i\in
I_{\Sigma}$ we have
\begin{equation}\label{sup_gs_sum}
{\|x_{i[0,t]}\|}_{\infty}\leq
\sigma_{i}(|x_i(0)|)+\sum\limits_{j=1}^{n}\hat{\gamma}_{ij}({\|x_{j[0,t]}\|}_{\infty})+\hat{\gamma}_{i}({\|u\|}_{\infty})\end{equation}
and for $i\in I_{\max}$ it follows
\begin{equation}\label{sup_gs_max}
\begin{split} {\|x_{i[0,t]}\|}_{\infty} \leq
\max\{\sigma_{i}(|x_i&(0)|),\max\limits_j\{\hat{\gamma}_{ij}({\|x_{j[0,t]}\|}_{\infty})\},\hat{\gamma}_{i}({\|u\|}_{\infty})\}.
\end{split}
\end{equation}
Let us denote
$w=\left({\|x_{1[0,t]}\|}_{\infty},\dots,{\|x_{n[0,t]}\|}_{\infty}\right)^T$,
\[\begin{array}{lll}v&=&\left(\begin{array}{c}
\mu_1(\sigma_{1}(|x_1(0)|),\hat{\gamma}_{1}({\|u\|}_{\infty}))\\\vdots\\\mu_n(\sigma_{n}(|x_n(0)|),\hat{\gamma}_{n}({\|u\|}_{\infty}))\end{array}\right)=\mu(\sigma(|x(0)|),\hat{\gamma}({\|u\|}_{\infty})),\end{array}
\]
where we use notation $\mu$ and $\mu_i$ defined in \eqref{eq:mu}.
{}From (\ref{sup_gs_sum}), (\ref{sup_gs_max}) we obtain
$w\leq\mu(\Gamma(w),v)$. Then by Lemma~\ref{lemma_w_bound} there
exists $\phi\in\K_{\infty}$ such that
\begin{equation}\label{gs bound}
\begin{array}{lll}
{\|x_{[0,t]}\|}_{\infty}&\leq&
\phi(\|\mu(\sigma(|x(0)|),\hat{\gamma}({\|u\|}_{\infty}))\|)\\
&\leq&\phi(\|\sigma(|x(0)|)+\hat{\gamma}({\|u\|}_{\infty})\|)\\
&\leq&\phi(2\|\sigma(|x(0)|)\|)+\phi(2\|\hat{\gamma}({\|u\|}_{\infty})\|)
\end{array}
\end{equation}
for all $t>0$. Hence for
every initial condition and essentially bounded input $u$ the
solution of the system (\ref{whole system})  exists for all $t\geq
0$ and is uniformly bounded, since the right-hand side of (\ref{gs
bound}) does not depend on $t$. The estimate for GS is then given
by (\ref{gs bound}).\end{proof}
\begin{theorem}\label{theorem_ag}
Assume that each subsystem of (\ref{subsystems}) has the AG
property and that solutions of system (\ref{whole system}) exist for
all positive times and are uniformly bounded.
Let a gain matrix $\Gamma$ be given by $\Gamma=(\overline{\gamma}_{ij})_{n\times n}$.
If there exists a $D$ as in
(\ref{D}) such that $\Gamma\circ D(s) \not\geq s$ for all $s\neq
0, s\geq 0$, then system (\ref{whole system}) satisfies the AG property.
\end{theorem}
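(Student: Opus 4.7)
The plan is to mirror the proof of Theorem~\ref{theorem_gs}: reduce the asymptotic gain estimate to a vectorial inequality of the form $w\leq \mu(\Gamma(w),v)$ and invoke Lemma~\ref{lemma_w_bound}. The essential new ingredient compared to the GS case is that $w_i$ should encode $\limsup_{t\to\infty}|x_i(t)|$ rather than a running supremum, while $v$ should depend only on $\|u\|_{\infty}$.

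First I would upgrade the per-subsystem AG inequalities \eqref{subssystems AG_sum}--\eqref{subssystems AG_max} so that on the right-hand side $\|x_j\|_{\infty}$ is replaced by $\limsup_{t\to\infty}|x_j(t)|$. The standard device is to apply the AG property of subsystem $i$ to the time-shifted trajectories $t\mapsto x_i(T+t)$: these are solutions of subsystem $i$ starting at $x_i(T)$, with cross-inputs $x_j(T+\cdot)$ for $j\neq i$ and external input $u(T+\cdot)$. Uniform boundedness of solutions (assumed in the theorem) guarantees finiteness of all essential suprema in sight. Using $\|x_{j,[T,\infty)}\|_{\infty}\downarrow \limsup_{t\to\infty}|x_j(t)|$ as $T\to\infty$, continuity of the gains $\overline{\gamma}_{ij}$, and the elementary identity $\limsup_{t\to\infty}|x_i(T+t)|=\limsup_{t\to\infty}|x_i(t)|$, passing to the limit $T\to\infty$ would yield for $i\in I_{\Sigma}$
\[
\limsup_{t\to\infty}|x_i(t)|\leq \sum_{j=1}^{n}\overline{\gamma}_{ij}\!\left(\limsup_{t\to\infty}|x_j(t)|\right)+\overline{\gamma}_i(\|u\|_{\infty}),
\]
and the analogous maximum version for $i\in I_{\max}$.

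Next, I would set $w_i:=\limsup_{t\to\infty}|x_i(t)|$ (finite by uniform boundedness) and define $v\in\R_+^n$ componentwise by $v_i:=\overline{\gamma}_i(\|u\|_{\infty})$. Using the operator $\mu$ from \eqref{eq:mu}, the upgraded inequalities combine into $w\leq \mu(\Gamma(w),v)$. Lemma~\ref{lemma_w_bound} then supplies a $\phi\in\K_{\infty}$ with $\|w\|\leq \phi(\|v\|)$. Since each component of $v$ is either zero or a class-$\K$ function of $\|u\|_{\infty}$, one obtains $\|v\|\leq \tilde{\gamma}(\|u\|_{\infty})$ for some $\tilde{\gamma}\in\K$, whence an AG estimate of the form $\limsup_{t\to\infty}|x(t)|\leq \overline{\gamma}(\|u\|_{\infty})$ with $\overline{\gamma}:=C\,\phi\circ \tilde{\gamma}\in\K$ follows at once (after choosing an equivalent norm on $\R^n$).

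The main obstacle is the careful justification of the limit interchange $T\to\infty$: the AG property of each subsystem is formulated with $\|x_j\|_{\infty}$ on the right, and one must argue that this inequality legitimately applies to every shifted trajectory and that the resulting essential suprema decrease to the correct $\limsup$ values. Once this upgrade is in hand, the remainder of the argument is a transcription of the GS proof, the only change being the input of a different $w$ and $v$ into Lemma~\ref{lemma_w_bound}.
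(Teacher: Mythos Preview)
Your proposal is correct and follows essentially the same route as the paper: apply the AG property of each subsystem on shifted time intervals $[\tau,\infty)$, pass to the limit $\tau\to\infty$ to replace $\|x_{j,[\tau,\infty)}\|_{\infty}$ by $\limsup_{t\to\infty}|x_j(t)|$, and then feed $w_i=\limsup_{t\to\infty}|x_i(t)|$ and $v_i=\overline{\gamma}_i(\|u\|_\infty)$ into Lemma~\ref{lemma_w_bound}. The only difference is bookkeeping: the paper handles the limit interchange you flag as the ``main obstacle'' by citing \cite[Lemma~7]{DRW07-mcss} (for the identity $\limsup_{t\to\infty}|x_i(t)|=\lim_{\tau\to\infty}\|x_{i,[\tau,\infty)}\|_\infty$) and \cite[Lemma~II.1]{SoW96} (for pushing the limit through the continuous gains), rather than arguing it from scratch as you sketch.
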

\begin{remark}
The existence of solutions for all times is essential, otherwise
the assertion is not true. See
Example~14 in \cite{DRW07-mcss}.
\end{remark}
\begin{proof}
Let $\tau$ be an arbitrary initial time. From the definition of
the AG property we have  for $i\in I_{\Sigma}$
\begin{equation}\label{ag_proof_sum}
\limsup\limits_{t \to \infty}|x_i(t)|\leq
\sum\limits_{j=1}^{n}\overline{\gamma}_{ij}({\|x_{j[\tau,\infty]}\|}_{\infty})+\overline{\gamma}_{i}({\|u\|}_{\infty})
\end{equation}
and for $i\in I_{\max}$
\begin{equation}\label{ag_proof_max}
\limsup\limits_{t \to \infty}|x_i(t)| \leq
\max\{\max\limits_j\{\overline{\gamma}_{ij}({\|x_{j[\tau,\infty]}\|}_{\infty})\}
,
\overline{\gamma}_{i}({\|u\|}_{\infty})\}.\end{equation}
Since all solutions of (\ref{subsystems}) are bounded
we obtain by \cite[Lemma~7]{DRW07-mcss} that
\[\limsup\limits_{t \to \infty}|x_i(t)|=\limsup\limits_{\tau \to
 \infty}(\|x_{i[\tau,\infty]}\|_{\infty})=:l_i(x_i),i=1,\ldots,n.\]
 By this property from (\ref{ag_proof_sum}),
(\ref{ag_proof_max}) and \cite[Lemma II.1]{SoW96} it follows that
\begin{equation}\notag
l_i(x_i)\leq
\sum\limits_{j=1}^{n}\gamma_{ij}(l_j(x_j))+\overline{\gamma}_{i}({\|u\|}_{\infty})
\end{equation} for $i\in I_{\Sigma}$
and
\begin{equation*}
l_i(x_i) \leq
\max\{\max\limits_j\{\gamma_{ij}(l_j(x_j))\},\overline{\gamma}_{i}({\|u\|}_{\infty})\}
\end{equation*} for $i\in I_{\max}$.
Using Lemma \ref{lemma_w_bound} we conclude
\begin{equation}\label{bound ag}
\limsup\limits_{t\to\infty}\|x(t)\|\leq\phi({\|u\|}_{\infty})
\end{equation}
for some $\phi$ of class $\cal{K}$, which is the desired AG
property.\end{proof}
\begin{theorem}\label{theorem_iss}
  Assume that each subsystem of (\ref{subsystems}) is ISS and let $\Gamma$ be
  defined by \eqref{operator_gamma}. If there exists a $D$ as in (\ref{D})
  such that $\Gamma\circ D(s) \not\geq s$ for all $s\neq 0,s\geq 0$, then
  system (\ref{whole system}) is ISS.
\end{theorem}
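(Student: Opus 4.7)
The plan is to combine Theorem~\ref{theorem_gs}, Theorem~\ref{theorem_ag}, and the characterization of Sontag and Wang that a system is ISS if and only if it is GS and has the AG property. This is the same strategy used in \cite{DRW07-mcss} and the heavy lifting has already been done in the previous two theorems; what remains is mostly bookkeeping to make sure the small gain condition transfers correctly.

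First, I would observe that the ISS inequalities \eqref{subssystems ISS_sum}--\eqref{subssystems ISS_max} immediately furnish the defining inequalities for GS and AG with the same gains $\gamma_{ij}$. For GS one sets $\sigma_i(r) := \beta_i(r,0)$, $\hat{\gamma}_{ij} := \gamma_{ij}$ and $\hat{\gamma}_i := \gamma_i$, since $\beta_i(|x_i(0)|,t)\leq \beta_i(|x_i(0)|,0)$. For AG one notes that $\beta_i(|x_i(0)|,t)\to 0$ as $t\to\infty$, and that $\|x_{j[0,t]}\|_\infty \to \|x_j\|_\infty$ as $t\to\infty$, so the AG gains $\overline{\gamma}_{ij}$ and $\overline{\gamma}_i$ may also be taken as $\gamma_{ij}$ and $\gamma_i$ respectively. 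Consequently the gain matrix of the interconnected system for both the GS and the AG property is precisely the operator $\Gamma$ from \eqref{operator_gamma}, and the hypothesis $\Gamma \circ D \not\geq \id$ is inherited verbatim.

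With this in hand, the argument assembles in three steps. Step one: apply Theorem~\ref{theorem_gs} to the GS inequalities derived above; this yields the GS estimate \eqref{gs bound} for the full system \eqref{whole system}, and in particular establishes that solutions exist for all $t \geq 0$ and are uniformly bounded on bounded sets of initial data and inputs. Step two: having verified forward completeness and uniform boundedness, apply Theorem~\ref{theorem_ag} to the AG inequalities to conclude that the interconnection satisfies the AG estimate \eqref{bound ag}. Step three: invoke the equivalence ISS $\Leftrightarrow$ GS $\wedge$ AG from \cite{SoW96} (as recalled in the Remark following the definition of ISS) to conclude that the full system \eqref{whole system} is ISS.

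There is no real obstacle here beyond making the observation that the small gain condition passes from the ISS gain matrix to the gain matrices appearing in the GS and AG formulations; since the same $\gamma_{ij}$'s can be used, this is immediate and no further manipulation of the operator $\Gamma$ or the diagonal operator $D_\alpha$ is required. The main technical content of the theorem was already absorbed into Lemma~\ref{lemma_w_bound} and its use in Theorems \ref{theorem_gs} and \ref{theorem_ag}.
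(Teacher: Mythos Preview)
Your proposal is correct and follows essentially the same route as the paper's proof: derive GS and AG for each subsystem from its ISS estimate (with gains no larger than the original $\gamma_{ij}$, so the small gain condition is inherited), apply Theorems~\ref{theorem_gs} and~\ref{theorem_ag}, and then invoke the Sontag--Wang characterization from \cite{SoW96}. The only cosmetic difference is that you explicitly write down $\sigma_i=\beta_i(\cdot,0)$ and take the GS/AG gains equal to $\gamma_{ij}$, whereas the paper simply notes $\hat{\gamma}_{ij},\overline{\gamma}_{ij}\leq\gamma_{ij}$; this does not affect the argument.
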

\begin{proof}
  Since each subsystem is ISS it follows in particular that it is GS with
  gains $\hat{\gamma}_{ij}\leq\gamma_{ij}$. By
  Theorem~\ref{theorem_gs} the whole interconnection (\ref{whole system}) is
  then GS. This implies that solutions of (\ref{whole system}) exists for all
  times.

Another consequence of ISS property of each subsystem is that each
of them has the AG property
with gains $\overline{\gamma}_{ij}\leq\gamma_{ij}$.
Applying Theorem~\ref{theorem_ag} the whole system (\ref{whole system}) has the AG property.

This implies that (\ref{whole system}) is ISS by Theorem~1 in \cite{SoW96}.
\end{proof}

\begin{remark}
  Note that applying Theorem~1 in \cite{SoW96} we lose information about the
  gains. As we will see in the second main result in
  Section~\ref{sec:Lyapunov} gains can be constructed in the framework of
  Lyapunov theory.
\end{remark}
\begin{remark}
  A more general formulation of ISS conditions for interconnected systems can
  be given in terms of so-called {\it monotone aggregation functions} (MAFs,
  introduced in \cite{Rueff07,DRW09}). In this general setting small
  gain conditions also involve a scaling operator $D$.  Since our construction
  relies on Lemma~\ref{lemma_w_bound} a generalization of the results in this
  paper could be obtained if sums are replaced by general MAFs and
  maximization is retained. We expect that the assertion of the
  Theorem~\ref{theorem_iss} remains valid in the more general case, at least
  if the MAFs are subadditive.
\end{remark}

The following section gives a Lyapunov type counterpart of the
small gain theorem obtained in this section and shows an explicit
construction of an ISS Lyapunov function for interconnections of
ISS systems.

\section{Construction of ISS Lyapunov functions}\label{sec:Lyapunov}
Again we consider an interconnection of $n$ subsystems in form of
\eqref{subsystems} where each subsystem is assumed to be ISS
and hence there is a smooth ISS Lyapunov function for each
subsystem. We will impose a small gain condition on the Lyapunov
gains to prove the ISS property of the whole system \eqref{whole
system} and we will look for an explicit construction of an ISS
Lyapunov function for it. For our purpose it is sufficient to work with not
necessarily smooth Lyapunov functions defined as follows.

A continuous function $\alpha: \R_+\rightarrow\R_+$, where $\alpha(r)=0$ if and only if $r=0$, is called positive definite.

A function $V:\R^n\rightarrow\R_+$ is called {\it proper and positive definite} if there are $\psi_1,\psi_2\in\K_{\infty}$ such that
\[\psi_1(\|x\|)\leq V(x)\leq\psi_2(\|x\|)\,, \quad \forall x\in\R^n.\]

\begin{definition}
A continuous function $V:\R^n\rightarrow\R_{+}$ is called an {\it
ISS Lyapunov function for the system} (\ref{whole system}) if

1) it is proper, positive definite and locally Lipschitz
continuous on $\R^n\backslash\{0\}$

2) there exists $\gamma\in\K$, and a positive definite function $\alpha$ such that in all points of differentiability of $V$ we have
\begin{equation}\label{lyap_func}
V(x)\geq\gamma(\|u\|)\Rightarrow \nabla V(x)f(x,u)\leq-\alpha(\|x\|).
\end{equation}
\end{definition}
Note that we do not require an ISS Lyapunov function to be smooth.
However as a locally Lipschitz continuous
function it is differentiable
almost everywhere.
\begin{remark}
In Theorem~2.3 in \cite{DRW09} it was proved that the system
(\ref{whole system}) is ISS if and only if it admits an (not
necessarily smooth) ISS Lyapunov function.
\end{remark}

ISS Lyapunov function for subsystems can be defined in the following way.

\begin{definition}
A continuous function $V_i:\R^{N_i}\rightarrow\R_{+}$ is called an
{\it ISS Lyapunov function for the subsystem} $i$ in
(\ref{subsystems}) if

1) it is proper and positive definite and locally Lipschitz continuous on $\R^{N_i}\backslash\{0\}$

2) there exist $\gamma_{ij}\in\K_{\infty}\cup\{0\}$, $j=1,\ldots,n$, $i\neq j$, $\gamma_i\in\K$ and a positive definite function $\alpha_i$ such that in all points of differentiability of $V_i$ we have

for $i\in I_{\Sigma}$
$$V_i(x_i)\geq\gamma_{i1}(V_1(x_1))+\ldots+\gamma_{in}(V_n(x_n))+\gamma_i(\|u\|)\Rightarrow$$
\begin{equation}\label{lyap_func_sum}
\nabla V_i(x_i)f_i(x,u)\leq-\alpha_i(\|x_i\|)
\end{equation}
and
for $i\in I_{\max}$
$$V_i(x_i)\geq\max\{\gamma_{i1}(V_1(x_1)),\ldots,\gamma_{in}(V_n(x_n)),\gamma_i(\|u\|)\}\Rightarrow$$
\begin{equation}\label{lyap_func_max}
\nabla V_i(x_i)f_i(x,u)\leq-\alpha_i(\|x_i\|).
\end{equation}
\end{definition}

Let the matrix $\overline{\Gamma}$ be obtained from matrix $\Gamma$ by
adding external gains $\gamma_i$ as the last column and let the
map $\overline{\Gamma}:\R_{+}^{n+1}\rightarrow\R_{+}^n$ be defined
by:
\begin{equation}\label{operator_gamma_bar}
\overline{\Gamma}(s,r):=\{\overline{\Gamma}_1(s,r),\dots,\overline{\Gamma}_n(s,r)\}
\end{equation}
for $s\in\R_{+}^n$ and $r\in\R_{+}$, where
$\overline{\Gamma}_i:\R_{+}^{n+1}\rightarrow\R_{+}$ is given by
$\overline{\Gamma}_i(s,r):=\gamma_{i1}(s_1)+\dots+\gamma_{in}(s_n)+\gamma_i(r)$
for $i\in I_{\Sigma}$ and by
$\overline{\Gamma}_i(s,r):=\max\{\gamma_{i1}(s_1),\dots,\gamma_{in}(s_{n}),\gamma_i(r)\}$
for $i\in I_{\Sigma}$.

Before we proceed to the main result of this section let us recall
a related result from \cite{DRW09} adapted to our situation:
\begin{theorem}\label{th:theorem5.3-fromSICON}
Consider the interconnection given by
\eqref{subsystems} where each subsystem $i$ has an ISS Lyapunov
function $V_i$ with the corresponding Lyapunov gains
$\gamma_{ij}$, $\gamma_i$, $i,j=1,\dots,n$ as in
\eqref{lyap_func_sum} and \eqref{lyap_func_max}. Let
$\overline{\Gamma}$ be defined as in \eqref{operator_gamma_bar}.
Assume that there is an $\Omega$-path $\sigma$ with respect to
$\Gamma$ and a function $\phi\in\Kinf$ such that
\begin{equation}\label{eq:5.3inSICON}
\overline\Gamma(\sigma(r),\phi(r))<\sigma(r),\quad\forall r>0.
\end{equation}
Then an ISS Lyapunov function for the overall system is given by
$$V(x)=\max_{i=1,\dots,n}\sigma_i^{-1}(V_i(x_i)).$$
\end{theorem}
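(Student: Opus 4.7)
The plan is to verify the three defining properties of an ISS Lyapunov function for $V(x):=\max_{i=1,\ldots,n}\sigma_i^{-1}(V_i(x_i))$: properness/positive definiteness, local Lipschitz continuity on $\R^n\setminus\{0\}$, and the implication \eqref{lyap_func} with an appropriately chosen gain. The first two properties are essentially bookkeeping. Since each $V_i$ satisfies $\psi_{1,i}(\|x_i\|)\le V_i(x_i)\le\psi_{2,i}(\|x_i\|)$ for some $\psi_{1,i},\psi_{2,i}\in\Kinf$ and $\sigma_i^{-1}\in\Kinf$, composing and taking the maximum yields $\psi_1(\|x\|)\le V(x)\le\psi_2(\|x\|)$ for suitable $\psi_1,\psi_2\in\Kinf$. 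Local Lipschitz continuity of $V$ on $\R^n\setminus\{0\}$ follows from the fact that $V$ is a maximum of finitely many compositions $\sigma_i^{-1}\circ V_i$, each of which is locally Lipschitz on the complement of the origin by hypothesis (the $\Omega$-path property (i) gives local Lipschitzness of $\sigma_i^{-1}$ on $(0,\infty)$).

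The core step is the dissipation inequality. I would define the external gain as $\gamma:=\phi^{-1}\in\K$. Fix a point $x\neq 0$ where $V$ is differentiable and suppose $V(x)\ge\gamma(\|u\|)$. Writing $r:=V(x)>0$, we have $\|u\|\le\phi(r)$, and there is a unique index $i_0$ attaining the maximum (uniqueness at differentiable points being standard for a finite max of smooth-enough functions; at ties $V$ typically fails to be differentiable). Hence $V_{i_0}(x_{i_0})=\sigma_{i_0}(r)$ while $V_j(x_j)\le\sigma_j(r)$ for all $j\neq i_0$. Applying the hypothesis $\overline\Gamma(\sigma(r),\phi(r))<\sigma(r)$ componentwise at index $i_0$ gives, in the summation case $i_0\in I_\Sigma$,
\begin{equation*}
\sum_{j=1}^{n}\gamma_{i_0 j}(V_j(x_j))+\gamma_{i_0}(\|u\|)
\le \sum_{j=1}^{n}\gamma_{i_0 j}(\sigma_j(r))+\gamma_{i_0}(\phi(r))
< \sigma_{i_0}(r)=V_{i_0}(x_{i_0}),
\end{equation*}
and analogously in the maximization case $i_0\in I_{\max}$. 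Either way the triggering condition of the subsystem's ISS Lyapunov inequality \eqref{lyap_func_sum} or \eqref{lyap_func_max} is active, yielding $\nabla V_{i_0}(x_{i_0})f_{i_0}(x,u_{i_0})\le-\alpha_{i_0}(\|x_{i_0}\|)$.

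It remains to translate this into a decay estimate for $V$. At a differentiable point with unique maximizer $i_0$, the chain rule gives $\nabla V(x)f(x,u)=(\sigma_{i_0}^{-1})'(V_{i_0}(x_{i_0}))\,\nabla V_{i_0}(x_{i_0})f_{i_0}(x,u_{i_0})\le -(\sigma_{i_0}^{-1})'(V_{i_0}(x_{i_0}))\,\alpha_{i_0}(\|x_{i_0}\|)$. The main technical obstacle is turning the right-hand side into a single positive definite function of $\|x\|$, because both factors depend on $x$ and the first factor involves a derivative that is only defined almost everywhere. This is exactly where $\Omega$-path property (ii) comes in: on any compact set $P\subset(0,\infty)$ one has a uniform lower bound $(\sigma_i^{-1})'(s)\ge c(P)>0$. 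Since $V(x)=r$ confines $V_{i_0}(x_{i_0})=\sigma_{i_0}(r)$ to a compact subinterval of $(0,\infty)$ whenever $\|x\|$ itself lies in a compact subset of $(0,\infty)$, and since $\|x_{i_0}\|$ is bounded below by a $\Kinf$-function of $\|x\|$ on the relevant sublevel structure (because the other $V_j(x_j)$ are dominated by $\sigma_j(r)$ while $V_{i_0}(x_{i_0})=\sigma_{i_0}(r)$), one constructs a positive definite $\alpha$ with
\begin{equation*}
\nabla V(x)f(x,u)\le -\alpha(\|x\|)
\end{equation*}
at every differentiable point satisfying $V(x)\ge\gamma(\|u\|)$. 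This establishes \eqref{lyap_func} and completes the verification that $V$ is an ISS Lyapunov function.
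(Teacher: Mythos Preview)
The paper does not actually prove this theorem; immediately after the statement it notes that the result ``is a special case of \cite[Theorem~5.3]{DRW09} that was stated for a more general $\overline\Gamma$ than here'' and uses it as a black box. Your proposal therefore supplies what the paper deliberately outsources.

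Your argument follows the standard route of the cited reference: verify that $V=\max_i \sigma_i^{-1}\circ V_i$ is proper, positive definite, and locally Lipschitz; set the ISS gain to $\gamma=\phi^{-1}$; at a point of differentiability identify an active index $i_0$, use \eqref{eq:5.3inSICON} together with $V_j(x_j)\le\sigma_j(r)$ and $\|u\|\le\phi(r)$ to trigger the subsystem implication \eqref{lyap_func_sum} or \eqref{lyap_func_max}; and then convert the resulting decrease of $V_{i_0}$ into a decrease of $V$ via the chain rule and the derivative bounds in $\Omega$-path property~(ii). This is exactly the mechanism behind \cite[Theorem~5.3]{DRW09}, so your approach is correct and coincides with the intended one.

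Two small technical points worth tightening if you flesh this out. First, uniqueness of the maximizer at differentiable points is not automatic (think of identical summands); the clean statement is that at any point of differentiability one may choose \emph{any} active index $i_0$, since all active branches must share the same directional derivative there. Second, your lower bound on $\|x_{i_0}\|$ in terms of $\|x\|$ can be made explicit: from $V_{i_0}(x_{i_0})=\sigma_{i_0}(V(x))$ and the sandwich bounds $\psi_{1,i_0},\psi_{2,i_0}$ and $\psi_1$ one gets $\|x_{i_0}\|\ge \psi_{2,i_0}^{-1}\circ\sigma_{i_0}\circ\psi_1(\|x\|)$, which together with the local lower bound $c$ on $(\sigma_{i_0}^{-1})'$ yields the desired positive definite $\alpha$.
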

We note that this theorem is a special case of \cite[Theorem~5.3]{DRW09} that
was stated for a more general $\overline\Gamma$ than here. Moreover it was
shown that an $\Omega$-path needed for the above construction always exists if
$\Gamma$ is irreducible and $\Gamma\not\ge\id$ in $\R^n_+$. The pure cases
$I_{\Sigma}=I$ and $I_{\max}=I$ are already treated in \cite{DRW09}, where the
existence of $\phi$ that makes Theorem~\ref{th:theorem5.3-fromSICON}
applicable was shown under the condition $D\circ\Gamma\not\ge\id$ for the case
$I_\Sigma=I$ and $\Gamma\not\ge\id$ for the case $I_{\max}=I$.

The next result gives a counterpart of \cite[Corollaries~5.5 and 5.6]{DRW09}
specified for the situation where both $I_{\Sigma}$ and $I_{\max}$ can be
nonempty.
\begin{theorem}\label{ISS irreducible}
  Assume that each subsystem of (\ref{subsystems}) has an ISS Lyapunov
  function $V_i$ and the corresponding gain matrix is given by
  (\ref{operator_gamma_bar}). If $\Gamma$ is irreducible and if there exists
  $D_\alpha$ as in (\ref{D}) such that $\Gamma\circ D_\alpha(s) \not\geq s$
  for all $s\neq 0, s\geq 0$ is satisfied, then the system (\ref{whole
    system}) is ISS and an ISS Lyapunov function is given by
\begin{equation}\label{lyap_func_explicit}
V(x)=\max\limits_{i=1,\ldots,n}\sigma_i^{-1}(V_i(x_i)),
\end{equation}
where $\sigma\in\K_{\infty}^n$ is an arbitrary $\Omega$-path with respect to $D\circ\Gamma$.
\end{theorem}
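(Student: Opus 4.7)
The plan is to reduce the theorem to Theorem~\ref{th:theorem5.3-fromSICON} by producing an $\Omega$-path $\sigma$ with respect to $\Gamma$ and a function $\phi\in\Kinf$ satisfying $\overline{\Gamma}(\sigma(r),\phi(r))<\sigma(r)$ for all $r>0$. Once this is done, the formula (\ref{lyap_func_explicit}) follows directly from Theorem~\ref{th:theorem5.3-fromSICON}, and existence of an ISS Lyapunov function implies ISS by the remark following the definition.

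The first step uses the hypothesis that $\Gamma$ is irreducible together with the small gain condition $\Gamma\circ D_\alpha\not\geq\id$. Corollary~\ref{c:Opath} then supplies an $\Omega$-path $\sigma\in\K_\infty^n$ with respect to $D_\alpha\circ\Gamma$, i.e.\ $D_\alpha\circ\Gamma(\sigma(r))<\sigma(r)$ for all $r>0$. Since $\Gamma\le D_\alpha\circ\Gamma$ componentwise (the $D_i$ are either $\id+\alpha$ or $\id$), the same $\sigma$ is a fortiori an $\Omega$-path with respect to $\Gamma$, which is what Theorem~\ref{th:theorem5.3-fromSICON} requires.

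The second and harder step is to construct $\phi$. For indices $i\in I_{\max}$ the $i$-th component of the inequality $D_\alpha\circ\Gamma\circ\sigma(r)<\sigma(r)$ reads $\max_j\gamma_{ij}(\sigma_j(r))<\sigma_i(r)$, so it suffices to ensure $\gamma_i(\phi(r))<\sigma_i(r)$. For $i\in I_\Sigma$ the $i$-th component reads
\[
(\id+\alpha)\Bigl(\sum_{j}\gamma_{ij}(\sigma_j(r))\Bigr)<\sigma_i(r),
\]
which yields the slack
\[
\sigma_i(r)-\sum_{j}\gamma_{ij}(\sigma_j(r))\;>\;\alpha\!\left(\sum_{j}\gamma_{ij}(\sigma_j(r))\right)=:\delta_i(r).
\]
Here the irreducibility of $\Gamma$ guarantees that every row contains at least one nonzero $\gamma_{ij}$ (so $\delta_i\in\Kinf$ as a function of $r$), which is the crucial point; if $\Gamma$ had a zero row we could still handle it separately, but irreducibility makes the argument uniform. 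Thus for $i\in I_\Sigma$ with $\gamma_i\neq 0$ it suffices to take $\phi$ such that $\gamma_i(\phi(r))\le \tfrac{1}{2}\delta_i(r)$. Setting
\[
\phi(r):=\min\Bigl\{\min_{i\in I_\Sigma,\,\gamma_i\neq 0}\gamma_i^{-1}\!\bigl(\tfrac{1}{2}\delta_i(r)\bigr),\ \min_{i\in I_{\max},\,\gamma_i\neq 0}\gamma_i^{-1}\!\bigl(\tfrac{1}{2}\sigma_i(r)\bigr)\Bigr\}
\]
(with the understanding that indices with $\gamma_i=0$ impose no constraint, and $\phi\equiv\id$ if all $\gamma_i$ vanish) produces a function in $\K_\infty$ because each entry inside the minimum is a composition of $\K_\infty$ functions and the minimum of finitely many $\K_\infty$ functions is again in $\K_\infty$. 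By construction $\overline{\Gamma}(\sigma(r),\phi(r))<\sigma(r)$ for every $r>0$.

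With these two ingredients, Theorem~\ref{th:theorem5.3-fromSICON} applies and delivers the ISS Lyapunov function (\ref{lyap_func_explicit}); ISS of (\ref{whole system}) then follows from the remark recalling \cite[Theorem~2.3]{DRW09}. The main obstacle is the bookkeeping in constructing $\phi$: one must verify that the slack function $\delta_i$ is indeed of class $\K_\infty$ in $r$ (which uses irreducibility so that at least one gain in every $I_\Sigma$-row is nonzero) and that its inverse image under $\gamma_i$ behaves well near $0$ and $\infty$; this is precisely where the mixed sum/max structure becomes delicate, because the maximum rows give slack directly from the small gain condition whereas the sum rows only give slack through the $\alpha$-perturbation $D_\alpha$.
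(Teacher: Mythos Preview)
Your proof follows essentially the same strategy as the paper: both reduce to Theorem~\ref{th:theorem5.3-fromSICON} by exhibiting a $\phi\in\Kinf$ such that $\overline{\Gamma}(\sigma(r),\phi(r))<\sigma(r)$, exploiting the slack $\alpha\circ\Gamma_i\circ\sigma$ in the $I_\Sigma$-rows and the direct inequality $\Gamma_i(\sigma(r))<\sigma_i(r)$ in the $I_{\max}$-rows.

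There is, however, one technical gap in your construction of $\phi$. The external gains $\gamma_i$ are only assumed to be of class $\K$, not $\Kinf$ (see the definition of an ISS Lyapunov function for the subsystems), so $\gamma_i^{-1}$ is in general defined only on the bounded interval $[0,\sup\gamma_i)$. Consequently the expressions $\gamma_i^{-1}\bigl(\tfrac{1}{2}\delta_i(r)\bigr)$ and $\gamma_i^{-1}\bigl(\tfrac{1}{2}\sigma_i(r)\bigr)$ may be undefined for large $r$, and your justification that ``each entry inside the minimum is a composition of $\K_\infty$ functions'' is not correct as stated. You flag this issue in your closing paragraph but do not resolve it. The paper's proof fixes exactly this point by first choosing $\tilde\gamma_i\in\Kinf$ with $\tilde\gamma_i\ge\gamma_i$ and building $\phi$ from the globally defined inverses $\tilde\gamma_i^{-1}$; since $\gamma_i(\phi(r))\le\tilde\gamma_i(\phi(r))$, the required inequalities still follow. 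With this modification your argument goes through and coincides with the paper's.
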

\begin{proof}
{}From the structure of $D_\alpha$ it follows that
\[\begin{array}{llll}
\sigma_i&>&(\id+\alpha)\circ\Gamma_i(\sigma), &\quad i\in I_{\Sigma},\\
\sigma_i&>&\Gamma_i(\sigma),& \quad i\in I_{\max}.
\end{array}\]
The irreducibility of $\Gamma$ ensures that $\Gamma(\sigma)$ is
unbounded in all components. Let $\phi\in\K_{\infty}$ be such that
for all $r\geq 0$ the inequality
$\alpha(\Gamma_i(\sigma(r)))\geq\max\limits_{i=1,
  \ldots,n}\gamma_{i}(\phi(r))$ holds for $i\in I_{\Sigma}$ and
$\Gamma_i(\sigma(r))\geq\max\limits_{i=1,
  \ldots,n}\gamma_{i}(\phi(r))$ for $i\in I_{\max}$.
Note that such a $\phi$ always exists and can be chosen as follows.
For any $\gamma_i\in\K$ we choose
$\tilde\gamma_i\in\Kinf$ such that $\tilde\gamma_i\ge\gamma_i$.
Then $\phi$ can be taken as
$\phi(r):=\frac{1}{2}\min\{\min\limits_{i\in I_{\Sigma},j\in
  I}\tilde\gamma_j^{-1}(\alpha(\Gamma_i(\sigma(r)))),\min\limits_{i\in
  I_{\max},j\in I}\tilde\gamma_j^{-1}(\Gamma_i(\sigma(r)))\}$. Note
that $\phi$ is a $\Kinf$ function since the minimum over $\Kinf$
functions is again of class $\Kinf$. Then we have for all
$r>0,i\in I_{\Sigma}$ that
\[\sigma_i(r)>D_i\circ\Gamma_i(\sigma(r))=\Gamma_i(\sigma(r))+\alpha(\Gamma_i(\sigma(r)))\geq\Gamma_i(\sigma(r))+\gamma_{i}(\phi(r))=\overline{\Gamma}_i(\sigma(r),\phi(r))\]
and for all $r>0,i\in I_{\max}$
\[\sigma_i(r)>D_i\circ\Gamma_i(\sigma(r))=\Gamma_i(\sigma(r))\geq\max\{\Gamma_i(\sigma(r)),\gamma_{i}(\phi(r))\}=\overline{\Gamma}_i(\sigma(r),\phi(r)).\]
Thus $\sigma(r)>\overline{\Gamma}(\sigma(r),\phi(r))$ and the
assertion follows from Theorem~\ref{th:theorem5.3-fromSICON}.
\end{proof}

The irreducibility assumption on $\Gamma$ means in particular that the graph
representing the interconnection structure of the whole system is strongly
connected. To treat the reducible case we consider an approach using the
irreducible components of $\Gamma$. If a matrix is reducible it can be
transformed to an upper block triangular form via a permutation of the
indices, \cite{BeP79}.

The following result is based on
\cite[Corollaries~6.3 and 6.4]{DRW09}.

\begin{theorem}
  Assume that each subsystem of (\ref{subsystems}) has an ISS Lyapunov
  function $V_i$ and the corresponding gain matrix is given by
  (\ref{operator_gamma_bar}). If there exists $D_\alpha$ as in (\ref{D}) such
  that $\Gamma\circ D_\alpha(s) \not\geq s$ for all $s\neq 0, s\geq 0$ is
  satisfied, then the system (\ref{whole system}) is ISS, moreover there
  exists an $\Omega$-path $\sigma$ and $\phi\in\Kinf$ satisfying
  $\overline\Gamma(\sigma(r),\phi(r))<\sigma(r), \forall \ r>0$ and an ISS
  Lyapunov function for the whole system \eqref{whole system} is given by
\[V(x)=\max\limits_{i=1,\ldots,n}\sigma_i^{-1}(V_i(x_i)).\]
\end{theorem}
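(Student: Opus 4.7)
The plan is to reduce the problem to the irreducible case already handled by Theorem~\ref{ISS irreducible}, by exploiting the block-triangular normal form \eqref{reducible_gamma} and gluing together $\Omega$-paths block by block. First, after permuting subsystem indices, assume $\Gamma$ is in the form \eqref{reducible_gamma} with diagonal blocks $\Upsilon_{jj} \in (\Kinf\cup\{0\})^{d_j \times d_j}$, $j=1,\ldots,d$, each either irreducible or equal to $0$. For each $j$, let $s^{(j)}\in\R_{+}^{d_j}$ denote the corresponding block of components. The small gain assumption $\Gamma\circ D_\alpha \not\geq \id$ restricts, upon setting all other blocks to zero, to the corresponding block condition $\Upsilon_{jj}\circ D_\alpha^{(j)} \not\geq \id$ on $\R^{d_j}_+$, where $D_\alpha^{(j)}$ is the natural restriction of $D_\alpha$.

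Next I would build an $\Omega$-path for $D_\alpha \circ \Gamma$ recursively from the bottom block upward. For the last block, either $\Upsilon_{dd}=0$, in which case any $\sigma^{(d)}\in\K_\infty^{d_d}$ works, or $\Upsilon_{dd}$ is irreducible and Corollary~\ref{c:Opath} applied to $D_\alpha^{(d)}\circ \Upsilon_{dd}$ yields an $\Omega$-path $\sigma^{(d)}$ with $D_\alpha^{(d)}\circ \Upsilon_{dd}(\sigma^{(d)}(r))<\sigma^{(d)}(r)$ for all $r>0$. Inductively, having built $\sigma^{(j+1)},\ldots,\sigma^{(d)}$, the $j$-th block inequality reads
\begin{equation*}
D_\alpha^{(j)} \circ \Upsilon_{jj}(s^{(j)}) + D_\alpha^{(j)}\!\left(\sum_{k>j}\Upsilon_{jk}(\sigma^{(k)}(r))\right) < s^{(j)},
\end{equation*}
where the sum should be read as a max in the rows belonging to $I_{\max}$. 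Using the small gain property of $\Upsilon_{jj}\circ D_\alpha^{(j)}$ together with Lemma~\ref{lemma_w_bound} (or Corollary~\ref{c:Opath} in the irreducible case) applied to $\Upsilon_{jj}$ with the off-diagonal sum playing the role of the external input $v$, one obtains a $\K_\infty$-path $\sigma^{(j)}$ of the form $\sigma^{(j)}(r) = \hat\sigma^{(j)}(\rho_j(r))$ for some sufficiently fast-growing $\rho_j \in \Kinf$, which satisfies the required strict inequality. Concatenating the blocks produces an $\Omega$-path $\sigma=(\sigma^{(1)},\ldots,\sigma^{(d)})$ for $D_\alpha \circ \Gamma$ on the permuted indices.

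Given $\sigma$, the existence of $\phi\in\Kinf$ making $\overline\Gamma(\sigma(r),\phi(r))<\sigma(r)$ hold for all $r>0$ is then obtained exactly as in the proof of Theorem~\ref{ISS irreducible}: for $i\in I_\Sigma$ the slack $\alpha(\Gamma_i(\sigma(r)))$ absorbs a term of the form $\gamma_i(\phi(r))$, while for $i\in I_{\max}$ the strict inequality $\sigma_i(r)>\Gamma_i(\sigma(r))$ provides the same kind of margin. Choosing $\tilde\gamma_i\in\Kinf$ with $\tilde\gamma_i\ge\gamma_i$ and setting $\phi$ to be half of the minimum of $\tilde\gamma_j^{-1}(\alpha(\Gamma_i(\sigma(r))))$ and $\tilde\gamma_j^{-1}(\Gamma_i(\sigma(r)))$ over the appropriate index sets yields a $\Kinf$ function. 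Theorem~\ref{th:theorem5.3-fromSICON} then delivers both the ISS property and the stated Lyapunov function $V(x)=\max_i \sigma_i^{-1}(V_i(x_i))$, up to undoing the initial index permutation.

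The main obstacle is the recursive gluing step: a priori, an $\Omega$-path for the upper-left block $\Upsilon_{jj}$ is only guaranteed on its own, not in the presence of the (possibly large) off-diagonal contribution $\sum_{k>j}\Upsilon_{jk}(\sigma^{(k)}(r))$. The point is to use the fact that $\hat\sigma^{(j)}$ is unbounded in every component (due to the irreducibility of the nonzero diagonal blocks and Corollary~\ref{c:Opath}) to rescale by a sufficiently fast $\rho_j$ so that the diagonal slack strictly dominates the off-diagonal input; this is precisely the mechanism by which the analogues \cite[Corollaries~6.3 and 6.4]{DRW09} are proved, and the arguments carry over verbatim once Lemma~\ref{lemma_w_bound} and Corollary~\ref{c:Opath} are in place for the mixed formulation.
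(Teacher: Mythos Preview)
Your route differs from the paper's. The paper does not build a single global $\Omega$-path by rescaling block paths bottom-up. Instead it first applies Theorem~\ref{ISS irreducible} to each irreducible diagonal block $\Upsilon_{jj}$ to obtain block Lyapunov functions $W_j$, and then inducts at the level of Lyapunov functions: assuming $\widetilde V_{k-1}$ is already an ISS Lyapunov function for the subsystem formed by the first $k-1$ blocks, it treats the pair $(\widetilde V_{k-1}, W_k)$ as a two-system cascade with a $2\times 2$ upper-triangular gain matrix $\overline\Gamma_k$, invokes \cite[Lemma~6.1]{DRW09} to obtain a two-dimensional $\Omega$-path $(\widetilde\sigma^k_1,\widetilde\sigma^k_2)$, and applies Theorem~\ref{th:theorem5.3-fromSICON} once per step. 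The global $\sigma$ is then read off from the nested maxima. This is the mechanism behind \cite[Corollaries~6.3 and~6.4]{DRW09}, not the direct rescaling you describe.

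Your direct construction has two gaps. First, the displayed block inequality is incorrect: $D_\alpha^{(j)}$ does not distribute over the sum of the diagonal and off-diagonal pieces, so for $i\in I_\Sigma$ in block $j$ the actual requirement is
\[
(\id+\alpha)\bigl(\Upsilon_{jj,i}(\sigma^{(j)}(r))+B_i(r)\bigr)<\sigma^{(j)}_i(r),
\]
with $B_i$ the off-diagonal contribution. From the block $\Omega$-path you only have $(\id+\alpha)(\Upsilon_{jj,i}(\hat\sigma^{(j)}(\tau)))<\hat\sigma^{(j)}_i(\tau)$, and it is not automatic that some reparametrization $\tau=\rho_j(r)$ produces the stronger inequality: $(\id+\alpha)$ is nonlinear, and the available slack $\hat\sigma^{(j)}_i(\tau)-(\id+\alpha)(\Upsilon_{jj,i}(\hat\sigma^{(j)}(\tau)))$ need not dominate what the nonlinearity adds when $B_i$ is inserted. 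The paper's cascade reduction sidesteps exactly this difficulty, since at each inductive step the relevant gain matrix is $2\times2$ upper triangular and no feedback term has to be balanced against an external contribution inside a nonlinear $(\id+\alpha)$. Second, your $\phi$-step copies the formula from the proof of Theorem~\ref{ISS irreducible}, but that argument explicitly uses irreducibility to guarantee that $\Gamma_i(\sigma(r))$ is unbounded for every $i$; in the reducible case a row of $\Gamma$ may vanish identically, and then your formula returns $\phi\equiv 0\notin\Kinf$. This second point is easily patched by treating zero rows separately via $\sigma_i$, but the first is the substantive obstacle.
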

\begin{proof}
After a renumbering of subsystems we can assume that
$\overline{\Gamma}$ is of the form (\ref{reducible_gamma}).
Let $D$ be the corresponding diagonal operator that contains $\id$
or $\id+\alpha$ on the diagonal depending on the new enumeration of
the subsystems. Let the state $x$ be partitioned into
$z_i\in\R^{d_i}$ where $d_i$ is the size of the $i$th diagonal
block $\Upsilon_{ii}$, $i=1,\dots,d$. And consider the subsystems
$\Sigma_j$ of the whole system \eqref{whole system} with these
states
\[z_j:=(x_{q_j+1}^T,x_{q_j+2}^T,\dots,x_{q_{j+1}}^T)^T,\] where
$q_j=\sum_{l=1}^{j-1}d_l$, with the convention that $q_1=0$. So the subsystems
$\Sigma_j$ correspond exactly to the strongly connected components of the
interconnection graph.  Note that each $\Upsilon_{jj},j=1,\dots,d$ satisfies a
small gain condition of the form $\Upsilon_{jj}\circ D_j\not\ge\id$ where
$D_j:\R^{d_j}\rightarrow\R^{d_j}$ is the corresponding part of $D_\alpha$.

For each $\Sigma_j$ with the gain operator $\Upsilon_{jj},j=1,\dots,d$ and
external inputs $z_{j+1},\dots,z_d, u$ Theorem~\ref{ISS irreducible} implies
that there is an ISS Lyapunov function
$W_j=\max\limits_{i=q_j+1,\dots,q_{j+1}}\widehat{\sigma}_i^{-1}(V_i(x_i))$ for
$\Sigma_j$, where
$(\widehat{\sigma}_{q_j+1},\dots,\widehat{\sigma}_{q_{j+1}})^T$ is an
arbitrary $\Omega$-path with respect to $\Upsilon_{jj}\circ D_j$. We will show
by induction over the number of blocks that an ISS Lyapunov function for the
whole system \eqref{whole system} of the form
$V(x)=\max\limits_{i=1,\ldots,n}\sigma_i^{-1}(V_i(x_i))$ exists, for an
appropriate $\sigma$.

For one irreducible bock there is nothing to show. Assume that for the system
corresponding to the first $k-1$ blocks an ISS Lyapunov function exists and is
given by
$\widetilde{V}_{k-1}=\max\limits_{i=1,\ldots,q_{k}}\sigma_i^{-1}(V_i(x_i))$.
Consider now the first $k$ blocks with state $(\widetilde{z}_{k-1}, z_k)$, where
$\widetilde{z}_{k-1}:=(z_1,\dots,z_{k-1})^T$. Then we have the implication
\begin{eqnarray*}
  \widetilde{V}_{k-1}(\widetilde{z}_{k-1})\ \geq \
  \widetilde{\gamma}_{k-1,k}(W_k(z_k))+
  \widetilde{\gamma}_{k-1,u}(\|u\|)\quad \Rightarrow
  \\
\nabla\widetilde{V}_{k-1}(\widetilde{z}_{k-1})
 \widetilde{f}_{k-1}(\widetilde{z}_{k-1},z_k,u)\ \leq \
 -\widetilde{\alpha}_{k-1}(\|\widetilde{z}_{k-1}\|)\,,
\end{eqnarray*}
where $\widetilde{\gamma}_{k-1,k}, \widetilde{\gamma}_{k-1,u}$ are the
corresponding gains, $\widetilde{f}_{k-1}$, $\widetilde{\alpha}_{k-1}$ are the
right hand side and dissipation rate of the first $k-1$ blocks.

The gain matrix corresponding to the block $k$ then has the form
\[\overline{\Gamma}_k=\left(\begin{array}{ccc}
0&\widetilde{\gamma}_{k-1,k}&\widetilde{\gamma}_{k-1,u}\\
0&0&\gamma_{k,u}
\end{array}
\right).\]

For $\overline{\Gamma}_k$ by \cite[Lemma~6.1]{DRW09} there
exist an $\Omega$-path
$\widetilde{\sigma}^k=(\widetilde{\sigma}_1^k,\widetilde{\sigma}_2^k)^T\in
\K_{\infty}^2$ and $\phi\in\K_{\infty}$ such that
$\overline\Gamma_k(\widetilde{\sigma}^k,\phi)<\widetilde{\sigma}^k$
holds.  Applying Theorem~\ref{th:theorem5.3-fromSICON} an ISS
Lyapunov function for the whole system exists and is given by
\begin{eqnarray}\notag
\widetilde{V}_k&=&\max\{(\widetilde{\sigma}_1^k)^{-1}(\widetilde{V}_{k-1}),(\widetilde{\sigma}_2^k)^{-1}(W_k)\}
\end{eqnarray}

A simple inductive argument shows that the final Lyapunov function is of the
form $V(x)=\max\limits_{k=1,\ldots,d}(\sigma_k^{-1}(W_k(z_k))$, where for
$k=1,\ldots, d-1$ we have (setting $\sigma_2^0=\id$)
\[ \sigma_k^{-1} = \left( \tilde{\sigma}_1^{d-1} \right)^{-1} \circ \dots \circ
\left( \tilde{\sigma}_1^{k} \right)^{-1} \circ
\left( \tilde{\sigma}_2^{k-1} \right)^{-1} \]
and $\sigma_d = \tilde{\sigma}_2^{d-1}$.
This completes the proof.
\end{proof}

\section{Conclusion}\label{sec:Conclusion}
We have considered large-scale interconnections of ISS systems. The mutual
influence of the subsystems on each other may either be expressed in terms of
summation or maximization of the corresponding gains. We have shown that such
a formulation may always be reduced to a pure maximization formulation,
however the presented procedure requires the knowledge of an $\Omega$-path of
the gain matrix, which amounts to having solved the problem. Also an
equivalent small gain condition has been derived which is adapted to the
particular problem. A simple example shows the effectiveness and advantage of
this condition in comparison to known results. Furthermore, the Lyapunov
version of the small gain theorem provides an explicit construction of ISS
Lyapunov function for the interconnection.
\section*{Acknowledgment}

The authors would like to thank the anonymous reviewers for careful reading
and helpful comments and in particular for pointing out the
question that lead to the result in Proposition~\ref{mixedismax}.

This research is funded by the Volkswagen Foundation (Project Nr.  I/82684
"Dynamic Large-Scale Logistics Networks"). S. Dashkovskiy is funded by the DFG
as a part of Collaborative Research Center 637 "Autonomous Cooperating
Logistic Processes - A Paradigm Shift and its Limitations".

\end{document}